\definecolor{myDarkGreen}{rgb}{0,0.5,0}
\def\NAT@def@citea{\def\@citea{\NAT@separator}}
\theoremstyle{plain}
\newtheorem{theorem}{Theorem}[section]
\newtheorem{proposition}[theorem]{Proposition}
\theoremstyle{definition}
\newtheorem{definition}[theorem]{Definition}
\newtheorem{example}[theorem]{Example}
\theoremstyle{remark}
\newtheorem{remark}{Remark}
\begin{document}

	\title{On Constraint Qualifications for MPECs with Applications to Bilevel Hyperparameter Optimization for Machine Learning}
	
	\author{
		\name{Jiani Li\textsuperscript{a}\thanks{Email: lijiani\_qaz@163.com}, 
			Qingna Li\textsuperscript{a,b}\thanks{Email: qnl@bit.edu.cn. Corresponding author. }
            and 
            Alain Zemkoho\textsuperscript{c}\thanks{Email: a.b.zemkoho@soton.ac.uk}
            }
		\affil{\textsuperscript{a}School of Mathematics and Statistics, Beijing Institute of Technology, Beijing, China;
        \textsuperscript{b}Beijing Key Laboratory on MCAACI/Key Laboratory of Mathematical Theory and Computation in Information Security, Beijing Institute of Technology, Beijing, China;
        \textsuperscript{c}School of Mathematical Sciences, University of Southampton, Southampton, SO17 1BJ, UK.
        }
	}
	
	\maketitle
    
	\begin{abstract}
        Constraint qualifications for a Mathematical Program with Equilibrium Constraints (MPEC) are essential for analyzing stationarity properties and establishing convergence results.  
        In this paper, we explore several classical MPEC constraint qualifications   
        and clarify the relationships among them. 
        We subsequently examine the behavior of these constraint qualifications in the context of a specific MPEC derived from bilevel hyperparameter optimization (BHO) for L1-loss support vector classification.
        In particular, for such an MPEC, we show that calmness, MPEC-CPLD and MPEC-RCRCQ hold automatically, while under certain assumptions, MPEC-LICQ, MPEC-MFCQ-T and MPEC-MFCQ-R also hold.
        
	\end{abstract}
	
	\begin{keywords}
		Bilevel optimization, Mathematical program with equilibrium constraints, Constraint qualifications, 
        Support vector classification, Hyperparameter selection
	\end{keywords}
	
	\section{Introduction}
    A bilevel optimization problem (BLO) is a hierarchical optimization problem in which the feasible region of the upper-level problem depends on the solution set of the lower-level problem \cite{Bard,SDempe1,DempeZemkohoBook}. 
It has been extensively studied because of its wide applications across various fields of science and engineering \cite{SDempe2,Sinha,Wogrin}, especially in the fast-developing area of machine learning \cite{hyper2,architecture2,rein2,image2}.

    In general, a BLO is highly complicated and computationally challenging to solve due to its nonconvexity and nondifferentiability \cite{SDempe3}. A commonly used approach for solving a BLO is to replace the lower-level problem with its Karush–Kuhn–Tucker (KKT) conditions, thereby transforming it into a mathematical program with equilibrium constraints (MPEC) \cite{DempeZemkoho2013,DempeZemkoho2012,LuoZQMPEC}. 
    Due to the presence of complementarity constraints, many commonly used constraint qualifications (CQs) in nonlinear programming (NLP), such as the linear independence constraint qualification (LICQ) and the Mangasarian-Fromovitz constraint qualification (MFCQ) fail to hold at all feasible points of an MPEC  \cite{MLF1}. 
    When CQs fail, the KKT conditions may not be valid at a minimizer, and the convergence assumptions for most standard methods for constrained optimization may not be satisfied \cite{ramos}.

    For these reasons, establishing appropriate constraint qualifications tailored to the complementarity structure in MPECs is crucial, in part to characterize local optimal solutions. Specialized CQs that capture the structure of the problem have been widely studied in the literature; see, e.g., \cite{LuoZQMPEC,Pang1999}.
    For example,  the \emph{tightened} NLP–based extension of MFCQ (MPEC-MFCQ-T) was proposed by Scheel and Scholtes \cite{Scheel2000}, and the \emph{relaxed} NLP–based MFCQ (MPEC-MFCQ-R) was introduced by Ralph and Wright \cite{Ralph04}. Ye \cite{JaneYe05} developed other extensions of MFCQ for MPECs such as MPEC piecewise MFCQ. Flegel and Kanzow \cite{MLF1,MFL2} explored MPEC-specific adaptations of the Abadie constraint qualification (MPEC-ACQ) and the Guignard constraint qualification (MPEC-GCQ). Further developments include the work by Henrion and Outrata \cite{Henrion}, who studied conditions for detecting the calmness property in constraint set-valued mappings. 
    
    {  Several other CQs were also proposed that are weaker than classical LICQ and MFCQ but still sufficient to guarantee stationarity and local error bounds~\cite{Alberto19,CL13}. Guo, Lin, and Zhang introduced the relaxed constant rank constraint qualification (RCRCQ), the relaxed constant positive linear dependence condition (RCPLD), and the constant rank of the subspace component condition (CRSC). They showed that these conditions yield local error bounds for nonlinear extremum problems and for MPECs with continuously differentiable data~\cite{GZL14}. Further extensions, such as the enhanced relaxed constant positive linear dependence condition (ERCPLD/rCPLD) and its MPEC variants (MPEC-RCPLD, MPEC-CRSC)~\cite{GLY13}, provide error-bound and calmness properties that are essential for the convergence analysis of various regularization and relaxation algorithms~\cite{ramos,GL13}.}

    On the other hand, there have been many recent publications on bilevel hyperparameter optimization (BHO). For instance, Bennett et al.  conducted a series of studies on hyperparameter selection based on transforming the problem into an MPEC \cite{Bennett2006,Kunapuli2008a,Kunapuli2008b,li2022}. 
    Li et al. \cite{ref1} formulated the support vector classification (SVC) hyperparameter selection problem as a bilevel optimization problem and then transformed it into an MPEC, which they solved with a relaxation algorithm that converges under the MPEC-MFCQ. 
    In \cite{li2023}, Qian et al. introduced an efficient linear-programming-based Newton global relaxation method to solve the MPEC derived from the bilevel model in \cite{Kunapuli2008a}, which incorporates multiple hyperparameters for feature selection. Furthermore, they showed that, under appropriate conditions, the MPEC-MFCQ defined in \cite{ref1} holds. In \cite{coniglio2023}, the authors further  characterized the MPEC-LICQ for the MPEC arising from the bilevel model used for hyperparameter selection in the context of the kernel-based SVC hyperparameter optimization problem. 
    Furthermore, in \cite{samuel}, Ward et al. improved solution methods for MPECs with new convergence results under weaker assumptions and formulated SVM hyperparameter tuning as an MPCC to leverage derivative information for efficiency. 
    Wang and Li \cite{yx24} formulated the hyperparameter selection for SVC with logistic loss as a bilevel optimization problem, transformed it into a single-level NLP via KKT conditions, and proposed a smoothing Newton method with superlinear convergence to efficiently obtain strict local minimizers.

    To summarize, with various CQs tailored to MPEC, the first aim of this paper is to take a close look at them and study the possible connections among them. In doing so, we identify those that are equivalent to each other (especially in the context of the multiple MFCQ--type CQs) and those that stand in strict implication relationships. Secondly, we study the behavior of these CQs for MPEC reformulation of (BHO) in support vector machines. 

    The contributions of the paper are as follows. First, for a general MPEC, we systematically analyze the relationships among several tailored constraint qualifications (such as MPEC-LICQ, MPEC-MFCQ-T, NNAMCQ and MPEC-MFCQ-R), establish their relationships and provide counterexamples showing where some reverse implications fail. Second, for the specific MPEC arising from the bilevel L1-loss support vector classification model, we identify conditions under which MPEC-LICQ, MPEC-MFCQ-T, MPEC-RCRCQ, MPEC-CPLD, MPEC-MFCQ-R and calmness property  hold or fail, thus offering practical ways for verifying these CQs in SVM-type bilevel formulations.
	
	The organization of the paper is as follows. In Section \ref{sec2},  we introduce some preliminaries for various definitions of constraint qualifications for MPECs. In Section \ref{sec3}, we study the relationships among them. In Section \ref{sec4}, after a brief introduction to the bilevel model for hyperparameter selection and its resulting MPEC in \cite{ref1}, we show that calmness property, MPEC-CPLD and MPEC-RCRCQ hold automatically for this MPEC, and that under certain conditions, MPEC-LICQ, MPEC-MFCQ-T and MPEC-MFCQ-R also hold.
    The final conclusions are presented in Section \ref{sec-6}.

    $\mathbf{Notations.}$
    We use $\|x\|_{0}$ to denote the number of nonzero elements in $ x \in \mathbb{R}^n$, while 
    $ \|x\|_{2} $ corresponds to the  $l_{2} $-norm of $ x $. 
    We denote  $x_{+}=\left(\left(x_{1}\right)_{+}, \cdots,\left(x_{n}\right)_{+}\right) \in \mathbb{R}^{n} $, where  $\left(x_{i}\right)_{+}=\max \left(x_{i}, 0\right) $.  
    $|\Omega| $ denotes the number of elements in the set $ \Omega \subset \mathbb{R}^{n} $. 
    For any positive integer $k$, we define $[k] := \{1,2,\ldots,k\}$.
   We denote by $\mathbb{B}_\varepsilon(v^*)
:= \{x \in \mathbb{R}^n \mid \|x - v^*\| \le \varepsilon\}$ the closed ball centered at \(v^*\) with radius \(\varepsilon\).
    We use $\mathbf{1}_{k} $ to denote a vector with elements all ones in $ \mathbb{R}^{k} $  
    and  $\mathbf{0}_{k} $ stands for a zero vector in  $\mathbb{R}^{k} $. $\mathbf{0}_{(\tau, \kappa)} $ will be used for a submatrix of the zero matrix, where  $\tau$  is the index set of the rows and $ \kappa$  is the index set of the columns. Similarly,  $I_{(\tau, \tau)}$  corresponds to a submatrix of an identity matrix indexed by both rows and columns in the set  $\tau $. Finally,  $\Theta_{(\tau, \cdot)} $ represents a submatrix of the matrix $ \Theta $, where $ \tau $ is the index set of the rows.

	 \section{Constraint Qualifications for MPECs}\label{sec2}

	In this section, we will briefly introduce the definitions of different CQs for MPECs as well as their properties. 
    The MPEC takes the following form:    
	\begin{equation}\label{mpec0}
		\begin{array}{cl}
			\min\limits_{v\in \mathbb{R}^n}& f(v) \\
			\hbox{s.t.}& g_i(v)\le 0, \ \forall \ i \in [m], \\
			& h_i(v)=0, \ \forall \ i \in [p], \\
			&  G_{i}(v) \geq 0 , \ H_{i}(v) \geq 0,\ G_{i}(v)H_{i}(v) =0, \forall \ i \in [l],
		\end{array}
		\tag{MPEC} 
	\end{equation} 
	where  $f: \mathbb{R}^{n} \rightarrow \mathbb{R}$, $g_i: \mathbb{R}^{n} \rightarrow \mathbb{R}$, $h_i: \mathbb{R}^{n} \rightarrow \mathbb{R}$,
    $G_i: \mathbb{R}^{n} \rightarrow \mathbb{R}$ and $H_i: \mathbb{R}^{n} \rightarrow \mathbb{R}$ are continuously differentiable.
	Let $v^*$ be a feasible point for \eqref{mpec0}. Define the following index sets:
	\begin{eqnarray*}
        I_g(v^*)&:=&\{i\in[m] \ | \  g_i(v^*)=0\}, \\
		I_G(v^*)&:=&\{i\in[l] \ | \  G_i(v^*)=0,  \ H_i(v^*)>0\},  \\ 
		I_H(v^*)&:=&\{i\in[l] \ | \  G_i(v^*)>0,  \ H_i(v^*)=0\},  \\ 
		I_{GH}(v^*)&:=&\{i\in[l] \ | \  G_i(v^*)=0,  \ H_i(v^*)=0\}.
    \end{eqnarray*}
    For simplicity, we usually drop the dependence on $v^*$ and use $I_g,\ I_G,\ I_H,\ I_{GH}$ instead. We start with MPEC-LICQ for \eqref{mpec0}, then move to some weaker constraint qualifications tailored for  \eqref{mpec0}.
    \begin{definition}\label{def-licq0}  \cite[page 920]{Scholtes2001} 
		Let $v^*$ be a feasible point of \eqref{mpec0}. 
        MPEC-LICQ is said to hold at $v^*$  if the set of gradient vectors 
        \begin{equation} \label{eq22}
            \begin{aligned}
            &\left\{\nabla g_i(v^*)\ |\  i\in I_g\right\} \\
            \cup& \big\{\left\{\nabla h_i(v^*)\ |\ i\in [p]\right\} \cup\left\{\nabla G_{i}\left(v^*\right) \mid i \in  I_{G} \cup I_{GH} \right\}
            \cup \left\{\nabla H_{i}\left(v^*\right) \mid i \in  I_{H} \cup I_{GH}\right\}\big\}
            \end{aligned}
        \end{equation}

		is  linearly independent.
	\end{definition} 	

    One type of MPEC-MFCQ (we refer to it as MPEC-MFCQ-T) is defined based on the following \emph{tightened} nonlinear programming problem at a feasible point $v^*$:
	\begin{equation} \label{tnlp}
		\begin{array}{cl}
			\min\limits_{v\in \mathbb{R}^n}& f(v) \\
			\hbox{s.t.}& g_i(v)\le 0, \ \forall \  i \in [m], \\
			& h_i(v)=0, \ \forall \ i \in [p], \\
			&  G_{i}(v) = 0 , \  H_i(v)\ge0,  \  \forall \ i\in I_G, \\
			&  G_{i}(v) \ge 0,  \  H_i(v)=0,  \ \forall \ i\in I_H, \\
			&    G_{i}(v)=0,  \ H_{i}(v) =0, \ \forall \ i\in I_{GH}. 
		\end{array}
		\tag{TNLP($v^*$)}
	\end{equation}
	\begin{definition}\cite[page 263]{Hoheisel2013}\label{def-mfcq0} 
		Let $v^*$ be a feasible point of \eqref{mpec0}. 
        MPEC-MFCQ-T is said to hold at $v^*$ if the standard MFCQ holds for the corresponding \eqref{tnlp}. Equivalently, MPEC-MFCQ-T holds at $v^*$
        if the set of gradient vectors in \eqref{eq22} is positively linearly independent. That is, { there is no nonzero family of scalars }
        $\{\alpha_i\}_{i\in I_g}$, $\{\beta_i\}_{i\in[p]}$, $\{\gamma_i\}_{i\in I_G\cup I_{GH}}$, and $\{\eta_i\}_{i\in I_H\cup I_{GH}}$
        with
        $
        \alpha_i\ge 0,\  \forall\, i\in I_g,$
        such that
        $$
        \sum_{i\in I_g}\alpha_i \nabla g_i(v^*)
        +\sum_{i\in[p]}\beta_i \nabla h_i(v^*)
        +\sum_{i\in I_G\cup I_{GH}}\gamma_i \nabla G_i(v^*)
        +\sum_{i\in I_H\cup I_{GH}}\eta_i \nabla H_i(v^*)
        =0.$$

	\end{definition}

    In comparison, another type of MPEC-MFCQ \cite{Ralph04} (we refer to it as MPEC-MFCQ-R) is based on the following \emph{relaxed} nonlinear programming problem at the feasible point $v^*$:
	\begin{equation} \label{rnlp}
		\begin{array}{lll}
			\min\limits_{v\in \mathbb{R}^n}& f(v)&\\
			\hbox{s.t.} & g_i(v)\le 0,  \ &\forall \ i \in [m], \\
			& h_i(v)=0,  &\forall \ i \in [p], \\
			& 
			G_{i}(v) = 0,    \ &\forall \ i\in I_G, \\
			&H_{i}(v) = 0 ,  &  \forall \ i\in I_H, \\
			& G_{i}(v)\ge 0,  \ H_{i}(v) \ge 0, & \forall \ i\in I_{GH}.
		\end{array}
		\tag{RNLP($v^*$)}
	\end{equation}

    {
    \begin{definition}[MPEC-MFCQ-R]\label{def-mpec-mfcq-r}
    Let $v^*$ be a feasible point of \eqref{mpec0}. 
        MPEC-MFCQ-R is said to hold at $v^*$ if the standard MFCQ is satisfied for  \eqref{rnlp}; that is,  the following  set of gradient vectors 
        \begin{equation}\label{grad-mfcqr}
        \begin{aligned}
        &\Bigl\{\{\nabla g_i(v^*)\mid i\in I_g\}
         \cup
       \{-\nabla G_i(v^*)\mid i\in I_{GH}\}
         \cup
        \{-\nabla H_i(v^*)\mid i\in I_{GH}\}\Bigr\} \\
        \cup\;&
        \Bigl\{\{\nabla h_i(v^*)\mid i\in[p]\}
        \cup
        \{\nabla G_i(v^*)\mid i\in I_G\}
        \cup
       \{\nabla H_i(v^*)\mid i\in I_H\}\Bigr\}.
        \end{aligned}
        \end{equation}
        is positively linearly independent. That is, there do not exist scalars
        $\{\alpha_i\}_{i\in I_g}$,
        $\{\beta_i\}_{i\in[p]}$,
        $\{\gamma_i\}_{i\in I_G}$,
        $\{\eta_i\}_{i\in I_H}$,
        $\{\xi_i\}_{i\in I_{GH}}$,
        and $\{\zeta_i\}_{i\in I_{GH}}$
        with
        $\alpha_i\ge 0,\ \forall\, i\in I_g$, $
        \xi_i\ge 0,\ \forall\, i\in I_{GH}$, $
        \zeta_i\ge 0,\ \forall\, i\in I_{GH}$,
        not all of them being zero, such that
        \begin{equation}\label{eq_mfcqr_pli}
        \begin{aligned}
        &\sum_{i\in I_g}\alpha_i \nabla g_i(v^*)
        -\sum_{i\in I_{GH}}\xi_i \nabla G_i(v^*)
        -\sum_{i\in I_{GH}}\zeta_i \nabla H_i(v^*) \\
        +&\sum_{i\in[p]}\beta_i \nabla h_i(v^*)
        +\sum_{i\in I_G}\gamma_i \nabla G_i(v^*)
        +\sum_{i\in I_H}\eta_i \nabla H_i(v^*)
        =0.
        \end{aligned}
        \end{equation}
        \end{definition}
        }
	Obviously, \eqref{tnlp} and \eqref{rnlp}  depend on the chosen point $v^*$. Note that \eqref{tnlp} is referred to as \emph{tightened} because its feasible region is a subset of the feasible region of \eqref{mpec0}. 
    { The feasible region of  \eqref{mpec0} is locally contained within the feasible region of  \eqref{rnlp}.}
    Moreover, if  $v^*$  is a local minimizer of \eqref{rnlp}, then it is also a local minimizer of \eqref{mpec0}. If  $v^*$  is a local minimizer of \eqref{mpec0}, then it is a local minimizer of \eqref{tnlp}\cite{Scheel2000}. However, the reverse implications hold only under a strict complementarity condition at $v^*$.
	
	Classical MPEC constraint qualifications (e.g., MPEC-MFCQ-T and MPEC-LICQ) are often too restrictive to be satisfied in practical MPECs. To address this, generalized CQs have been introduced that relax certain assumptions and broaden the applicability. A flexible variant was proposed by Ye \cite{JaneYe05}, 
     termed the MPEC generalized MFCQ (MPEC-GMFCQ), 
    which is given below.

	\begin{definition}\label{def-gmfcq}\cite[Definition 2.11]{JaneYe05} 
		Let $v^*$ be a feasible point of \eqref{mpec0}. 
        MPEC-GMFCQ is said to hold at $v^*$ if the following conditions hold:
		\begin{itemize}
			\item [$\left(\mathrm{i}\right)$] For every partition of $I_{GH}$ into sets $P, Q, R$ with $R\neq \emptyset$,  there exists a vector $d\in\mathbb R^{n }$ such that 
			\begin{equation*}
				\left\{\begin{array}{ll}
					\nabla g_i(v^*)^\top d\le 0,  \ &\forall \ i\in I_g, \\
					\nabla h_i(v^*)^\top d= 0,  \ &\forall \ i\in [p], \\\nabla G_i(v^*)^\top d=0,  & \forall \ i\in I_G\cup Q, \\
					\nabla H_i(v^*)^\top d=0,  & \forall \ i\in I_H\cup P, \\
					\nabla
					G_i(v^*)^\top d\ge0, \ \nabla
					H_i(v^*)^\top d\ge0,  & \forall \ i\in R, \end{array}\right.
			\end{equation*}
			and for some $i\in R$ either $\nabla G_i(v^*)^\top d>0 $ or $ \nabla
			H_i(v^*)^\top d>0$.
			\item [$\left(\mathrm{ii}\right)$] For every partition of $I_{GH}$ into sets $P$ and $Q$,  the set of gradient vectors 
			\begin{equation*}\label{grad-gmfcq}
                    \{\nabla h_i(v^*) \mid i \in [p]\}
                    \cup
                    \{\nabla G_i(v^*) \mid i \in I_G \cup Q\}
                    \cup
                    \{\nabla H_i(v^*) \mid i \in I_H \cup P\}
            \end{equation*}
			is linearly independent and there exists $d\in\mathbb R^{n}$ such that 
			\begin{equation*}\label{cond-ii-2}
				\left\{\begin{array}{ll}
					\nabla g_i(v^*)^\top d< 0,  \ &\forall \ i\in I_g, \\
					\nabla h_i(v^*)^\top d= 0,  \ &\forall \ i\in [p], \\
					\nabla G_i(v^*)^\top d=0,  & \forall \ i\in I_G\cup Q, \\
					\nabla H_i(v^*)^\top d=0,  & \forall \ i\in I_H\cup P.\\
				\end{array}\right.
			\end{equation*}
		\end{itemize}
	\end{definition}
	
	Another useful CQ is the No Nonzero Abnormal Multiplier Constraint Qualification (NNAMCQ) which is equivalent to MPEC-GMFCQ \cite{JaneYe05}.
	\begin{definition}\label{def-nnamcq}\cite[Definition 2.10]{JaneYe05}
		Let $v^*$ be a feasible point of \eqref{mpec0}. 
        NNAMCQ is said to hold at $v^*$ if there is no nonzero vector $(\lambda^g, \lambda^h, \lambda^G, \lambda^H)\in\mathbb R^{m+p+2l}$ such that 
		\begin{equation}\label{eq-nnamcq}
			\left\{
			\begin{aligned}
				&0=\sum_{i\in I_g}\lambda_i^g\nabla g_i(v^*) + \sum_{i\in [p]}\lambda_i^h\nabla h_i(v^*) -\sum_{i=1}^l(\lambda_i^G\nabla G_i(v^*) +\lambda_i^H\nabla H_i(v^*) ),  \\
				& \lambda^g_{I_g}\ge0,  \ \lambda_{I_H}^G=0,  \ \lambda_{I_G}^H=0,  \\
				&\hbox{either }\lambda_i^G>0,  \ \ \lambda_i^H>0,  \hbox{ or }\lambda_i^G\lambda_i^H=0, \   \forall \ i\in I_{GH}.
			\end{aligned}
			\right.
		\end{equation}
	\end{definition}

    Other relatively relaxed constraint qualifications include: MPEC Abadie constraint qualification (MPEC-ACQ),  MPEC constant positive linear dependence condition (MPEC-CPLD) and  MPEC relaxed constant rank constraint qualification (MPEC-RCRCQ), which are given below.

    \begin{definition}\label{def-acq0}\cite[Definition 3.1]{MLF1}
Let $v^*$ be a feasible point of \eqref{mpec0}, and let $\mathcal V$ denote the feasible region of \eqref{mpec0}. 
MPEC-ACQ is said to hold at $v^*$ if 
\[
  \mathcal T_{\operatorname{MPEC}}^{\operatorname{lin}}(v^*) = \mathcal T(v^*),
\]
where the tangent cone of \eqref{mpec0} at $v^*$ is given by
\[
  \mathcal T(v^*)
  := \bigl\{ d \in \mathbb{R}^n \,\big|\,
  \exists\,\{v^k\} \subset \mathcal V,\ \exists\, t_k \downarrow 0
  \text{ such that } v^k \to v^*,\ (v^k - v^*)/t_k \to d \bigr\},
\]
and $\mathcal T_{\operatorname{MPEC}}^{\operatorname{lin}}(v^*)$ is the MPEC-linearized tangent cone of \eqref{mpec0} at $v^*$, defined by
\[
\mathcal T_{\operatorname{MPEC}}^{\operatorname{lin}}(v^*):=\left\{d\in\mathbb R^{n}:
\begin{array}{cc}
\nabla g_i(v^*)^\top d\le0,  & i\in I_g, \\
\nabla h_i(v^*)^\top d = 0, & i\in [p], \\
\nabla G_i(v^*)^\top d=0,  & i\in I_G, \\
\nabla H_i(v^*)^\top d = 0, & i\in I_H, \\
\nabla G_i(v^*)^\top d\ge0,  & i\in I_{GH}, \\
\nabla H_i(v^*)^\top d \ge 0, & i\in I_{GH}, \\
\bigl(\nabla G_i(v^*)^\top d\bigr)\bigl(\nabla H_i(v^*)^\top d\bigr) = 0, & i\in I_{GH}
\end{array}\right\}.
\]
\end{definition}

    { 
    
    \begin{definition}\cite[Definition 3.2]{TCA12}\label{def_MPECCPLD} 
        Let $v^*$ be a feasible point of \eqref{mpec0}.  MPEC-CPLD is said to
        hold at $v^*$ if, for every choice of index subsets
        $I_{1} \subseteq I_{g}$, 
        $I_{2} \subseteq [p]$, 
        $I_{3} \subseteq I_{GH} \cup I_{G}$, 
        $I_{4} \subseteq I_{GH} \cup I_{H}$,
        the following implication holds: If the set of gradient vectors
        \begin{equation}\label{def-mpec-cpld}
            \left\{\nabla g_{i}(v^{*}) \mid i \in I_{1}\right\}
        \cup
        \big\{\left\{\nabla h_{i}(v^{*}) \mid i \in I_{2}\right\}
        \cup
        \left\{\nabla G_{i}(v^{*}) \mid i \in I_{3}\right\}
        \cup
        \left\{\nabla H_{i}(v^{*}) \mid i \in I_{4}\right\}\big\}
        \end{equation}
        is positively linearly dependent at $v^{*}$, then there exists a neighborhood
        $\mathcal N(v^{*})$ such that  \eqref{def-mpec-cpld}
        remains linearly dependent for all $v \in \mathcal N(v^{*})$.
        \end{definition}

    \begin{definition}\cite[Definition 3.4]{GLY13} 
        Let $v^*$ be a feasible point of \eqref{mpec0}.  
        MPEC-RCRCQ is said to
        hold at $v^*$ if there exists $\delta > 0$ such that, 
        for each index subsets 
        $I_{1} \subseteq I_{g}$, 
        $I_{2} \subseteq [p]$, 
        $I_{3} \subseteq I_{GH} \cup I_{G}$, and 
        $I_{4} \subseteq I_{GH} \cup I_{H}$ 
        with the set of gradient vectors
        \begin{equation*}
        \left\{\nabla g_{i}\left(v\right) \mid i \in I_{1}\right\}
        \cup\left\{\nabla h_{i}\left(v\right) \mid i \in I_{2}\right\}
        \cup\left\{\nabla G_{i}\left(v\right) \mid i \in I_{3}\right\}
        \cup\left\{\nabla H_{i}\left(v\right) \mid i \in I_{4}\right\}
        \end{equation*}
        has the same rank for all $v \in \mathbb{B}_\delta(v^*)$.
        \end{definition}
        
}

{ 
    Besides the CQs introduced above, we also refer to the MPEC variants relaxed constant positive linear dependence constraint (MPEC-rCPLD and MPEC-RCPLD), MPEC constant rank constraint qualification (MPEC-CRCQ), relaxed no nonzero abnormal multiplier constraint qualification (RNNAMCQ), the MPEC constant positive linear dependence condition (MPEC-CCP), and the MPEC constant rank of the subspace component condition (MPEC-CRSC). Compared with MPEC-CPLD, MPEC-rCPLD restricts the positive linear dependence check to gradient families generated by a fixed basis of the equality constraints, whereas MPEC-RCPLD combines a relaxed constant rank requirement on selected gradients with an additional local implication involving multipliers. Compared with MPEC-RCRCQ, MPEC-CRCQ requires the constant rank requirement for all active index subsets. Compared with NNAMCQ, the relaxed NNAMCQ (RNNAMCQ) only rules out abnormal multipliers that satisfy an additional constant-rank structure in a neighbourhood of $v^{*}$. The condition MPEC-CCP is formulated as an outer semicontinuity requirement at $(v^{*},F(v^{*}))$ for the set-valued mapping $(x,z)\rightrightarrows \nabla F(x)^{\top}N_{\Pi}(z)$, while MPEC-CRSC combines a local constant-rank property of selected active constraint gradients with an additional multiplier-based implication that enforces linear dependence of corresponding gradient subfamilies near $v^{*}$. Since we only use these notions through standard implication chains, we do not reproduce their definitions here. Precise definitions can be found in \cite[Definition 3.1]{CL13}, \cite[Definitions 4.3 and 3.4]{GLY13}, \cite[Definition 3.9]{ramos}, and \cite[Definition 3.1 (1) and (2)]{Alberto19}.
    }

    We next present another equivalent formulation of \eqref{mpec0} and introduce the constraint  regularity properties associated with this formulation. 
     
     Define the function $F(v):=\big(g(v),\ h(v),\ \big(G(v),\ H(v)\big)\big)$ and the set $\Pi:=\mathbb{R}_{-}^{m}\times \{0\}^{p}\times \mathcal{C}^{l}$ with 
    $\mathcal{C}:=\left\{(a, b) \in \mathbb{R}^{2} \mid a \geq 0,\ b \geq 0,\ ab=0\right\}$, where 
    \(\mathcal{C}^{l}\) denotes the l-fold Cartesian product of the set \(\mathcal{C}\). 
     \eqref{mpec0} is equivalent to  the following problem
	\begin{equation}\label{ca}
		\min f(v) \quad \text { s.t. } \quad F(v) \in \Pi.    
	\end{equation}
    
   Define the multifunction $\Phi: \mathbb{R}^{m+p+2l} \rightrightarrows \mathbb{R}^{n}$ associated with the constraint system of \eqref{ca} by
    \begin{equation}\label{mpec_cons_map}
        \Phi(u):=\left\{z \in \mathbb{R}^{n} \mid F(z)+u \in \Pi\right\} .
    \end{equation}
    Beyond traditional CQs, the calmness property provides a stability characterization for the constraint system.

	\begin{definition}\cite[Definition 2]{MFL5} 
		Let $\Phi: \mathbb{R}^{m+p+2l} \rightrightarrows \mathbb{R}^{n}$ be a multifunction with closed graph and let $(u,v)\in \operatorname{gph}\Phi$, where 
        $$\operatorname{gph} \Phi:=\left\{(u,v)\in \mathbb{R}^{m+p+2l}\times \mathbb{R}^n \mid v\in \Phi(u)\right\}.$$
        $\Phi$ is calm at  $(u, v)$  provided there exist neighborhoods  $\mathcal{U}$  of  $u$, $\mathcal{V}$  of  $v $, and a modulus  $L \geqslant 0$  such that
		$$
		\Phi\left(u^{\prime}\right) \cap \mathcal{V} \subset \Phi(u)+L\|u^{\prime}-u\| \mathbb{B}_1(0), \quad \forall \  u^{\prime} \in \mathcal{U}.
		$$
	\end{definition}
    { 
    The Aubin property is strictly stronger than calmness and requires a Lipschitz estimate relating $\Phi(u_1)$ and $\Phi(u_2)$ for all $u_1,u_2$ in a neighbourhood. We do not elaborate on its definition here and refer to \cite[Definition~9.36]{Rockafellar98} for the precise formulation.
    }
    { 
    \begin{remark}\label{affinecalmness}
    When all constraint functions $g_i,h_i,G_i,H_i$ are affine of $v$, each complementarity pair splits the cone $\mathcal{C}$ into two polyhedral branches, equivalently $\mathcal{C}=\{(a,b)\in\mathbb{R}^2 \mid a=0,\ b\ge 0\}\ \cup\ \{(a,b)\in\mathbb{R}^2 \mid a\ge 0,\ b=0\}.$ Hence $\Pi$ is a finite union of polyhedral sets and, since $F$ is affine, the  $\operatorname{gph}\Phi$ is also a finite union of polyhedral sets. It is well known that multifunctions whose graphs are finite unions of polyhedral sets are upper–Lipschitz (and thus calm) at every point of their graphs; see, for example, \cite[Proposition~1]{Robinson09} and the discussion in \cite{Uderzo21}. Consequently, $\Phi$ is calm at every $(\bar u,\bar z)\in\operatorname{gph}\Phi$ in the affine case mentioned here.
    \end{remark}
    \begin{remark}
    Moreover, the calmness of $\Phi$ at $(0,\bar z)$ is equivalent to the existence of a local error bound for $F$ relative to $\Pi$, i.e., there exist $\kappa,\varepsilon>0$ such that $\operatorname{dist}\bigl(z,\Phi(0)\bigr)\;\le\;\kappa\,\operatorname{dist}\bigl(F(z),\Pi\bigr),\ \forall\, z\in \mathbb{B}_\varepsilon(\bar z)$  \cite{Henrion}.  
    \end{remark}
}

	    { \section{Relationships for MPEC Constraint Qualifications} \label{sec3}}
	
	Having introduced various constraint qualifications, we now investigate their connections.
    
         {\subsection{Connections Between MPEC Constraint Qualifications}}
    There are two immediate implication results.
    \begin{proposition}\label{lt}\textnormal{\cite{JaneYe05}}
		At a feasible point $v^*$ of \eqref{mpec0}, the following results hold:
        \begin{itemize}
            \item[$\left(\mathrm{i}\right)$] If MPEC-LICQ holds at $v^*$,  then MPEC-MFCQ-T holds at $v^*$.
            \item[$\left(\mathrm{ii}\right)$] If MPEC-MFCQ-T holds at $v^*$,  then MPEC-GMFCQ holds at $v^*$.
        \end{itemize}   
	\end{proposition}
	
    Next, comparing the \emph{tightened} and \emph{relaxed} formulations (Definitions \ref{def-mfcq0} and \ref{def-mpec-mfcq-r}) yields the following result.

    \begin{proposition}
		At a feasible point $v^*$ of \eqref{mpec0}, the following results hold:
        \begin{itemize}
			\item[$\left(\mathrm{i}\right)$] If  MPEC-MFCQ-T holds at $v^*$,  then MPEC-MFCQ-R holds at $v^*$.
			\item[$\left(\mathrm{ii}\right)$] If strict complementarity condition holds at $v^*$,   MPEC-MFCQ-T is equivalent to  MPEC-MFCQ-R.
		\end{itemize}
	\end{proposition}

	\begin{proof}
		$\left(\mathrm{i}\right)$ is { trivial}. For $\left(\mathrm{ii}\right)$, note that if strict  complementarity condition holds at $v^*$,   $I_{GH}=\emptyset$. In that case,   MPEC-MFCQ-R holds if 
		the following set of vectors
		\begin{equation*}
			\begin{aligned}
				&\big\{\nabla g_i(v^*)\ |\ i\in I_g\big\} \cup\big\{\left\{\nabla h_i(v^*)\ |\ i\in [p]\right\}\cup \left\{\nabla G_i(v^*)\ |\ i\in I_{G}\right\}\cup\left\{\nabla H_i(v^*)\ |\ i\in I_{H}\right\}\big\}
			\end{aligned}
		\end{equation*}
		is positively linearly independent. MPEC-MFCQ-T holds if the above set of vectors is positively linearly independent as well. Therefore,  in this case,  MPEC-MFCQ-T is equivalent to  MPEC-MFCQ-R.
	\end{proof}

	\begin{proposition}\label{nnamcqeqgmfcq} \textnormal{\cite[Proposition 2.1]{JaneYe05}} NNAMCQ is equivalent to  MPEC-GMFCQ.
	\end{proposition}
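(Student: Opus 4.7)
The plan is to prove the equivalence via Motzkin's theorem of the alternative, which converts the non-existence of a direction $d$ satisfying certain half-space conditions into the existence of a nonzero multiplier vector, and vice versa. The conceptual bridge is that any partition $(P,Q,R)$ of $I_{GH}$ appearing in the definition of MPEC-GMFCQ corresponds naturally to the sign pattern allowed on $(\lambda_i^G,\lambda_i^H)_{i\in I_{GH}}$ by NNAMCQ, via $R=\{i:\lambda_i^G>0,\ \lambda_i^H>0\}$, $P=\{i:\lambda_i^G=0\}$, and $Q=\{i:\lambda_i^H=0,\ \lambda_i^G\neq 0\}$.

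For the direction MPEC-GMFCQ $\Rightarrow$ NNAMCQ, I would argue by contrapositive: assume NNAMCQ fails, so there exists a nonzero $(\lambda^g,\lambda^h,\lambda^G,\lambda^H)$ satisfying the NNAMCQ identity, with $\lambda^g_{I_g}\ge 0$ and the stated sign/complementarity restrictions on $I_{GH}$. Building $(P,Q,R)$ from this multiplier as above, if $R\neq\emptyset$ one rewrites the NNAMCQ equality as a Farkas-type certificate showing that no $d$ can satisfy the strict-inequality system in condition (i) of MPEC-GMFCQ for this particular partition. If instead $R=\emptyset$, then the NNAMCQ identity either exhibits a linear dependence among the gradients listed in \eqref{grad-gmfcq} for the partition $(P,Q)$, or precludes the existence of a $d$ satisfying \eqref{cond-ii-2}, contradicting condition (ii) of MPEC-GMFCQ.

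For the converse NNAMCQ $\Rightarrow$ MPEC-GMFCQ, I would again proceed by contrapositive: suppose MPEC-GMFCQ fails. Then for some partition of $I_{GH}$, one of three sub-cases occurs — infeasibility of the $d$-system in (i), linear dependence of the gradients in \eqref{grad-gmfcq}, or infeasibility of the $d$-system in \eqref{cond-ii-2}. In each sub-case, Motzkin's or Gordan's theorem yields a nonzero multiplier. Careful bookkeeping then verifies that its sign pattern matches NNAMCQ: nonnegativity of $\lambda^g_{I_g}$ follows from the dual of the $\le 0$ inequalities, the conditions $\lambda^G_{I_H}=0$ and $\lambda^H_{I_G}=0$ hold because $G_i$ and $H_i$ do not appear for those indices in the partition-associated system, and the restriction ``either $\lambda_i^G>0$ and $\lambda_i^H>0$, or $\lambda_i^G\lambda_i^H=0$'' for $i\in I_{GH}$ is induced by whether $i\in R$, $i\in P$, or $i\in Q$.

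The main obstacle is the combinatorial case analysis: matching each possible failure mode of MPEC-GMFCQ with a compatible sign pattern in NNAMCQ, and conversely. In particular, verifying that the NNAMCQ complementarity-type restriction on $(\lambda_i^G,\lambda_i^H)$ for $i\in I_{GH}$ is exactly captured by the partition structure — rather than being accidentally violated — requires careful tracking of which dual variables come from equality constraints (and are thus free in sign) and which come from inequality constraints (and must have definite sign). Once this matching is carried out cleanly, both implications follow in parallel from the same theorem of the alternative.
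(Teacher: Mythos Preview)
The paper does not supply its own proof of this proposition; it merely records the result with the citation to \cite[Proposition~2.1]{JaneYe05} and invokes it later (e.g., in the proof of Proposition~\ref{gr}). Your approach via Motzkin's theorem of the alternative---matching the partition $(P,Q,R)$ of $I_{GH}$ in Definition~\ref{def-gmfcq} to the sign pattern of $(\lambda_i^G,\lambda_i^H)_{i\in I_{GH}}$ allowed by Definition~\ref{nnamcq}---is the standard and correct route, and is in essence how the cited reference establishes the equivalence. The one point that deserves the ``careful bookkeeping'' you flag is the clause in condition~(i) requiring \emph{at least one} strict inequality among the $R$-constraints: its negation is not a single Motzkin system but says that every $d$ feasible for the non-strict system makes all $R$-inequalities bind, and one must argue (for instance by absorbing any $i\in R$ with $\lambda_i^G=0$ or $\lambda_i^H=0$ into $P$ or $Q$) that the resulting dual certificate still satisfies the ``either both strictly positive or product zero'' restriction of NNAMCQ.
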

    We next establish the relationship among MPEC-GMFCQ and MPEC-MFCQ-R.
	
	\begin{proposition}\label{gr}
		At a feasible point $v^*$ of \eqref{mpec0}, the following results hold:
		\begin{itemize}
			\item[$\left(\mathrm{i}\right)$] If  MPEC-GMFCQ holds at $v^*$,  then  MPEC-MFCQ-R holds at $v^*$.
			\item[$\left(\mathrm{ii}\right)$] In particular,  if strict complementarity condition holds at $v^*$,  MPEC-GMFCQ is equivalent to  MPEC-MFCQ-R.
		\end{itemize}
	\end{proposition}
	
	\begin{proof}
		{  
            \noindent $\left(\mathrm{i}\right)$ By Proposition \ref{nnamcqeqgmfcq}, MPEC-GMFCQ is equivalent to NNAMCQ.
    We prove $\left(\mathrm{i}\right)$ by showing that NNAMCQ implies MPEC-MFCQ-R.
    Assume that MPEC-MFCQ-R fails at $v^*$. By Definition \ref{def-mpec-mfcq-r},
    there exist scalars
    $\{\alpha_i\}_{i\in I_g}$,
    $\{\beta_i\}_{i\in[p]}$,
    $\{\gamma_i\}_{i\in I_G}$,
    $\{\eta_i\}_{i\in I_H}$,
    $\{\xi_i\}_{i\in I_{GH}}$,
    $\{\zeta_i\}_{i\in I_{GH}}$,
    not all zero, such that
    $\alpha_i\ge 0$ for all $i\in I_g$,
    $\xi_i\ge 0$ and $\zeta_i\ge 0$ for all $i\in I_{GH}$, and \eqref{eq_mfcqr_pli} holds. 
    Now define $(\lambda^g,\lambda^h,\lambda^G,\lambda^H)\in\mathbb{R}^{m+p+2l}$ by
    \[
    \lambda_i^g:=\alpha_i\ (i\in I_g),\ 
    \lambda_i^h:=\beta_i\ (i\in[p]),\ 
    \lambda_i^G :=
    \begin{cases}
    -\gamma_i, & i\in I_G,\\
    0, & i\in I_H,\\
    \xi_i, & i\in I_{GH},
    \end{cases}
    \ 
    \lambda_i^H :=
    \begin{cases}
    0, & i\in I_G,\\
    -\eta_i, & i\in I_H,\\
    \zeta_i, & i\in I_{GH}.
    \end{cases}
    \]
    Then $\lambda^g_{I_g}\ge 0$, $\lambda^G_{I_H}=0$, and $\lambda^H_{I_G}=0$.
    Moreover, for each $i\in I_{GH}$ we have $\lambda_i^G=\xi_i\ge 0$ and
    $\lambda_i^H=\zeta_i\ge 0$, hence either $\lambda_i^G\lambda_i^H=0$ or
    $\lambda_i^G>0$ and $\lambda_i^H>0$.
    Finally, substituting the above definitions into \eqref{eq_mfcqr_pli} yields
    \[
    0=\sum_{i\in I_g}\lambda_i^g\nabla g_i(v^*)
      +\sum_{i\in[p]}\lambda_i^h\nabla h_i(v^*)
      -\sum_{i=1}^l\bigl(\lambda_i^G\nabla G_i(v^*)+\lambda_i^H\nabla H_i(v^*)\bigr),
    \]
    so $(\lambda^g,\lambda^h,\lambda^G,\lambda^H)\neq 0$ satisfies \eqref{eq-nnamcq}.
     NNAMCQ fails at $v^*$.
    Therefore, if NNAMCQ holds at $v^*$, then MPEC-MFCQ-R  holds at $v^*$.
    }
    
        \vspace{2.5mm}
	\noindent $\left(\mathrm{ii}\right)$ If $I_{GH}=\emptyset$,  the definition of MPEC-GMFCQ reduces to the following statement.  The set of gradient vectors
		\begin{equation*}
			\{\nabla h_i(v^*) \mid i \in [p]\}
                    \cup
                    \{\nabla G_i(v^*) \mid i \in I_G \}
                    \cup
                    \{\nabla H_i(v^*) \mid i \in I_H \}
		\end{equation*}
		is linearly independent and there exists $d\in\mathbb R^{n}$ such that 
		\begin{equation*}
			\left\{\begin{array}{ll}
				\nabla g_i(v^*)^\top d< 0,  \ &\forall \ i\in I_g, \\
				\nabla h_i(v^*)^\top d= 0,  \ &\forall \ i\in [p], \\
				\nabla G_i(v^*)^\top d=0,  & \forall \ i\in I_G, \\
				\nabla H_i(v^*)^\top d=0,  & \forall \ i\in I_H, \\
			\end{array}\right.
		\end{equation*}
		which is exactly standard MFCQ for the \emph{relaxed} problem \eqref{rnlp} when $I_{GH}=\emptyset$. Therefore,  if $I_{GH}=\emptyset$,    MPEC-GMFCQ is equivalent to  MPEC-MFCQ-R.           
	\end{proof}

    { \subsection{Some Counterexamples}}
	Next, we provide illustrative counterexamples that clarify how the main constraint qualifications for MPECs relate to each other. 

\begin{example}{(MPEC-MFCQ-T $\centernot\Longrightarrow$ MPEC-LICQ)}
Consider the MPEC
$$
\begin{array}{ll}
\displaystyle \min_{v \in \mathbb{R}^3} & f(v)=v_2+v_3 \\[1mm]
\mathrm{s.t.} & g_1(v)=v_1\le 0, \\
& g_2(v)=v_1+v_2 \le 0, \\
& G(v)=v_2 \ge 0, \\
& H(v)=v_3 \ge 0, \\
& G(v)\cdot H(v)=v_2 v_3=0 .
\end{array}
$$
At $v^*=(0,0,0)^\top$,  $I_g=\{1,2\}$, $I_G=I_H=\emptyset$, and $I_{GH}=\{1\}$.  
The corresponding \eqref{tnlp} is
$$
\begin{array}{cl}
\displaystyle \min_{v \in \mathbb{R}^3}  & v_2+v_3 \\[1mm]
\mathrm{s.t.} & g_1(v)=v_1\le 0, \\
& g_2(v)=v_1+v_2 \le 0, \\
& G(v)=v_2 = 0, \\
& H(v)=v_3 = 0 .
\end{array}
$$
The gradients of the active constraints are
$\nabla g_1(v^*)^\top=(1,0,0)$, $\nabla g_2(v^*)^\top=(1,1,0)$, 
$\nabla G(v^*)^\top=(0,1,0)$, and $\nabla H(v^*)^\top=(0,0,1)$.  
The set $\{\nabla g_1(v^*),\nabla g_2(v^*)\}\cup\{\nabla G(v^*),\nabla H(v^*)\}$ is
positively linearly independent but linearly dependent.  
Therefore, MFCQ holds for \eqref{tnlp}, while LICQ fails.  
Consequently,  the MPEC-MFCQ-T holds at $v^*$, but  the MPEC-LICQ fails.
\end{example}

\begin{example}{(NNAMCQ $\centernot\Longrightarrow$ MPEC-MFCQ-T)}
 Consider the MPEC
$$
\begin{array}{ll}
\displaystyle \min_{v \in \mathbb{R}^2} & f(v)=v_1+v_2 \\[1mm]
\mathrm{s.t.} & g_1(v)=-v_1-v_2 \le 0, \\
& G(v)=v_1 \ge 0, \quad H(v)=v_2 \ge 0, \quad G(v)H(v)=v_1v_2=0 .
\end{array}
$$
At $v^*=(0,0)^\top$,  $I_g=\{1\}$, $I_G=\emptyset$, $I_H=\emptyset$, and $I_{GH}=\{1\}$.  
The gradients are $\nabla g_1(v^*)^\top=(-1,-1)$, $\nabla G(v^*)^\top=(1,0)$, and $\nabla H(v^*)^\top=(0,1)$.  
By Definition~\ref{def-nnamcq}, there is no nonzero vector $(\lambda^g,\lambda^G,\lambda^H)\in\mathbb R\times\mathbb R\times\mathbb R$ such that
\[
\left\{
\begin{aligned}
&0=\lambda^g \!\!\begin{bmatrix}-1\\ -1\end{bmatrix}
 -\lambda^G\!\!\begin{bmatrix}1\\ 0\end{bmatrix}
 -\lambda^H\!\!\begin{bmatrix}0\\ 1\end{bmatrix}
 =\begin{bmatrix}-\lambda^g-\lambda^G \\ -\lambda^g-\lambda^H\end{bmatrix},\\
& \lambda^g\ge 0,\\
& \text{either }\lambda^G>0,\ \lambda^H>0,\ \text{or }\lambda^G\lambda^H=0 .
\end{aligned}
\right.
\]
Therefore, NNAMCQ holds at $v^*$.  
However, the set $\{\nabla g_1(v^*)\}\cup\{\nabla G(v^*),\nabla H(v^*)\}$ is positively linearly dependent, so  the MPEC-MFCQ-T fails at $v^*$.
\end{example}

\begin{example}{ (MPEC-MFCQ-R $\centernot\Longrightarrow$ NNAMCQ)}
Consider the MPEC
$$
\begin{array}{ll}
\displaystyle \min_{v \in \mathbb{R}^3} & f(v)=v_1+v_2+v_3 \\[1mm]
\mathrm{s.t.} & G_1(v)=v_1 \ge 0, \quad  H_1(v)=v_2 \ge 0, \quad G_1(v)H_1(v)=0, \\
& G_2(v)=v_1-v_2^2 \ge 0, \quad  H_2(v)=v_3 \ge 0, \quad G_2(v)H_2(v)=0. \\
\end{array}
$$
At $v^*=(0,0,0)^\top$,  $I_{GH}=\{1,2\}$ and $I_G=I_H=\emptyset$.  
The corresponding \eqref{rnlp} is
$$
\begin{array}{ll}
\displaystyle \min_{v \in \mathbb{R}^3} & f(v)=v_1+v_2+v_3 \\[1mm]
\mathrm{s.t.} & G_1(v)=v_1 \ge 0, \quad H_1(v)=v_2 \ge 0, \\
& G_2(v)=v_1-v_2^2 \ge 0, \quad H_2(v)=v_3 \ge 0 .
\end{array}
$$
The active gradients are
$\nabla G_1(v^*)^\top=(1,0,0)$, $\nabla H_1(v^*)^\top=(0,1,0)$,
$\nabla G_2(v^*)^\top=(1,0,0)$,  $\nabla H_2(v^*)^\top=(0,0,1)$.  
The set $\{-\nabla G_1(v^*),-\nabla H_1(v^*),-\nabla G_2(v^*),-\nabla H_2(v^*)\}$ is positively linearly independent. The MPEC-MFCQ-R holds at $v^*$.  
However, there exists a nonzero vector $(\lambda_1^{G},\lambda_1^{H},\lambda_2^{G},\lambda_2^{H})=(1,0,-1,0)^\top$ satisfying
\[
\left\{
\begin{aligned}
&0=-\lambda_1^G\!\!\begin{bmatrix}1\\0\\0\end{bmatrix}
   -\lambda_1^H\!\!\begin{bmatrix}0\\1\\0\end{bmatrix}
   -\lambda_2^G\!\!\begin{bmatrix}1\\0\\0\end{bmatrix}
   -\lambda_2^H\!\!\begin{bmatrix}0\\0\\1\end{bmatrix}
   =\begin{bmatrix}-\lambda_1^G-\lambda_2^G\\ -\lambda_1^H\\ -\lambda_2^H\end{bmatrix},\\
& \text{either }\lambda_i^G>0,\ \lambda_i^H>0,\ \text{or }\lambda_i^G\lambda_i^H=0,\ \ i\in I_{GH},
\end{aligned}
\right.
\]
hence NNAMCQ fails at $v^*$.
\end{example}

	Based on these properties and counterexamples, together with additional implication relations available in \cite{CL13,GL13,GLY13,GZL14,Alberto19,ramos,TCA12,MFL5,Henrion18,Kanzow10} and further counterexamples reported in \cite{Henrion,CL13,Alberto19},  we can summarize the implications among the CQs at a feasible point $v^*$ of \eqref{mpec0} in Figure \ref{cqsrelation}.

    \begin{figure}[H]
        \centering
        \includegraphics[width=\linewidth]{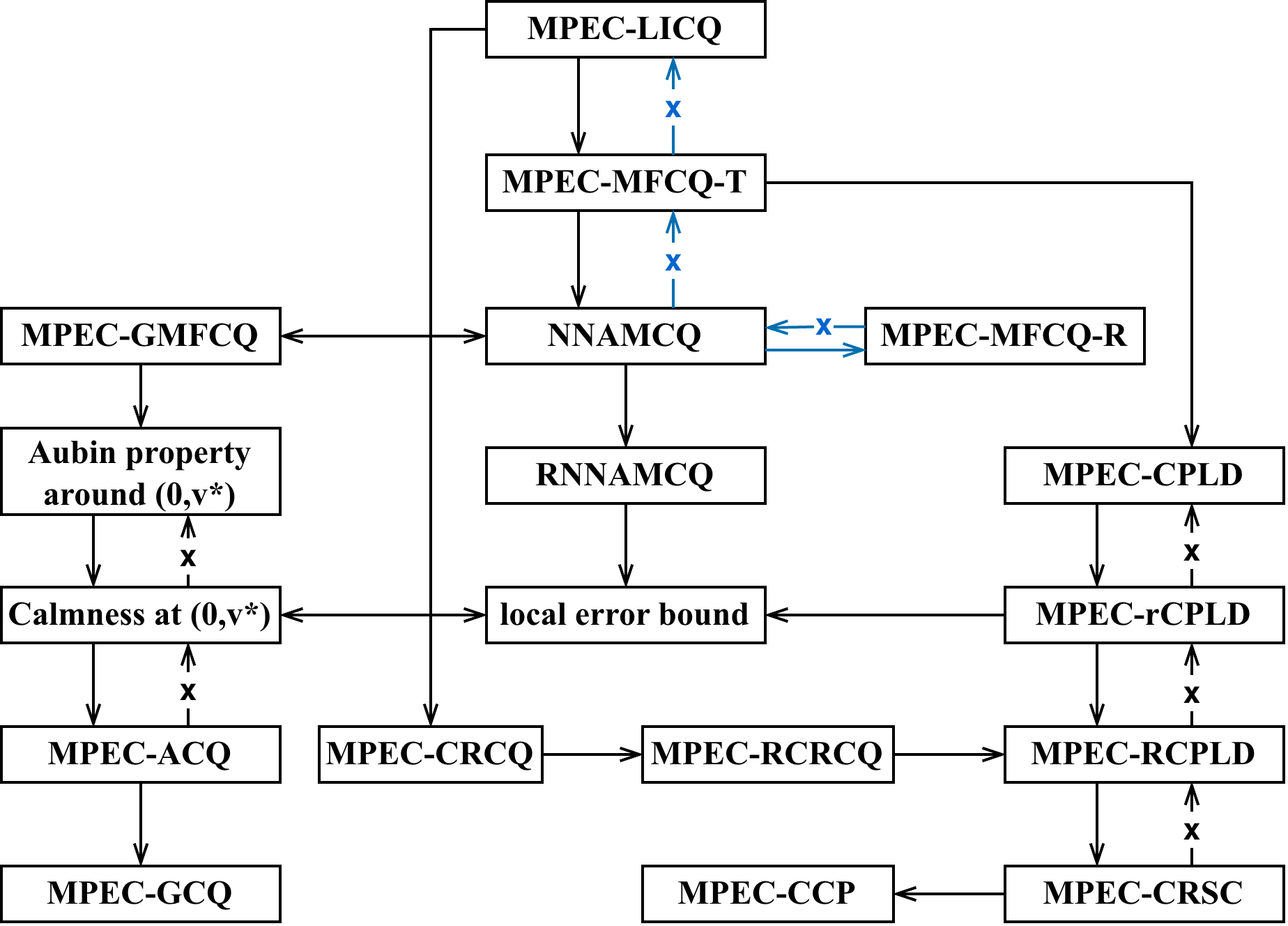}
        \caption{Relations among various MPEC-CQs}
        \label{cqsrelation}
    \end{figure}
     {
    \subsection{Discussions on Stationarity and Solution Methods}
    }
    MPEC-CQs are closely related to stationary points and solution methods for MPEC. In this part, we further discuss them.
    
    Unlike classical NLP, where suitable CQs make KKT conditions necessary, MPECs demand refined stationarity concepts to accommodate equilibrium constraints.
	\begin{definition}
		Let  $v^*$  be feasible for  \eqref{mpec0}.  $v^*$  is said to be\\
		\begin{itemize}
			\item[$\left(\mathrm{a}\right)$] weakly stationary, if there are multipliers  $\lambda \in \mathbb{R}^{m}$, $\mu \in \mathbb{R}^{p}$, $\gamma,\ \eta \in \mathbb{R}^{l}$  such that
			\begin{equation*}
				\begin{aligned}
					\nabla f\left(v^{*}\right) & +\sum_{i=1}^{m} \lambda_{i} \nabla g_{i}\left(v^{*}\right)+\sum_{i=1}^{p} \mu_{i} \nabla h_{i}\left(v^{*}\right)-\sum_{i=1}^{l} \gamma_{i} \nabla G_{i}\left(v^{*}\right)  -\sum_{i=1}^{l} \eta_{i} \nabla H_{i}\left(v^{*}\right)=0
				\end{aligned}
			\end{equation*}
			
			and  $\lambda_{i} \geq 0$, $\lambda_{i} g_{i}\left(v^{*}\right)=0\ (i=1, \ldots, m)$, $\gamma_{i}=0\ \left(i \in I_{H}\right)$, $\eta_{i}=0\ \left(i \in I_{G}\right) $;
			\item[$\left(\mathrm{b}\right)$]  C-stationary, if it is weakly stationary and  $\gamma_{i} \eta_{i} \geq 0$  for all  $i \in I_{GH} $;
			\item[$\left(\mathrm{c}\right)$] M-stationary, if it is weakly stationary and either  $\gamma_{i}>0$, $\eta_{i}>0$  or  $\gamma_{i} \eta_{i}=0$  for all  $i \in I_{GH} $;
			\item[$\left(\mathrm{d}\right)$] S-stationary if it is weakly stationary and  $\gamma_{i}, \eta_{i} \geq 0 $ for all  $i \in I_{GH} $.
		\end{itemize}
		
	\end{definition}
   
	It has been proven that S-stationary is equivalent to the standard KKT conditions of an MPEC when the MPEC is treated as a standard NLP. Moreover, S-stationary implies M-stationarity, M-stationarity implies C-stationarity, and C-stationarity further implies weak stationarity  \cite{MFL3}. However, even for very simple MPECs, S-stationary may fail to hold at a global minimum \cite{Scheel2000}. Therefore, under suitable assumptions, M-stationarity is generally the strongest stationarity concept that a local minimum can satisfy. Based on the above introduction, the relationships among various stationary points are summarized in Figure \ref{fig:stationary}. Selective results about stationary points are given below.

    \begin{figure}[H]
        \centering
        \includegraphics[width=0.9\linewidth]{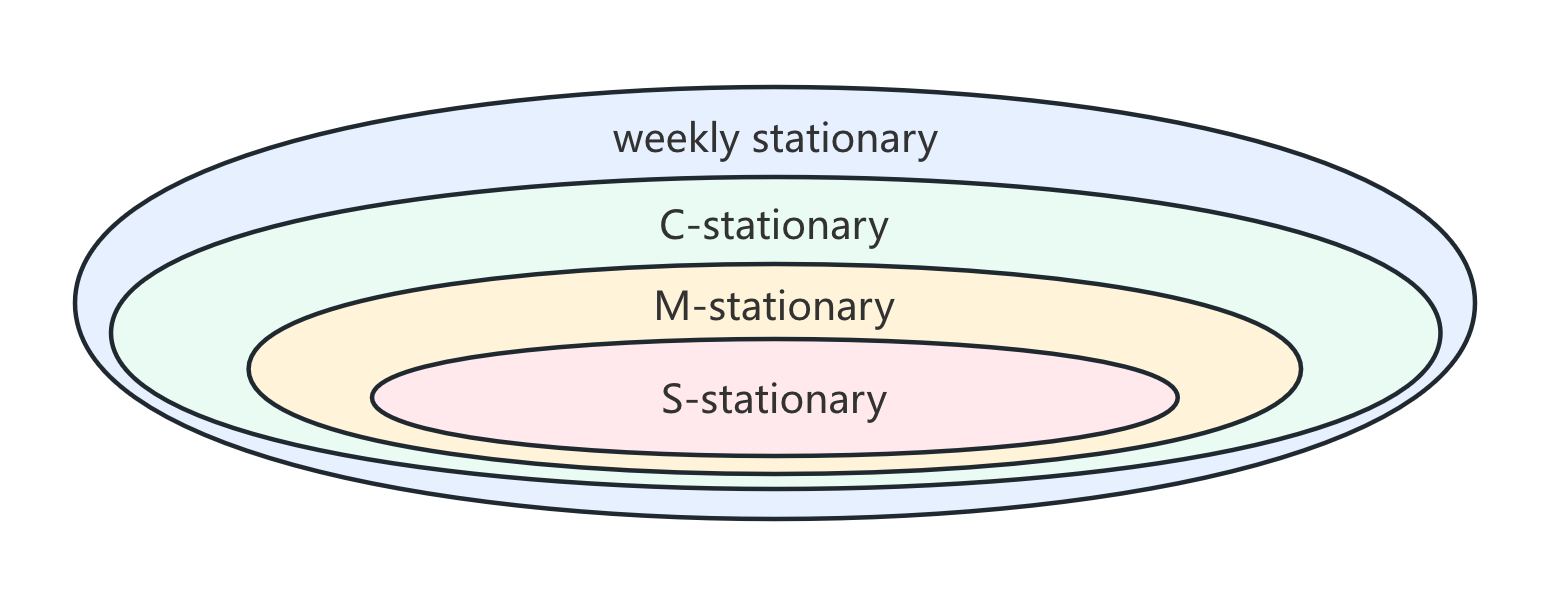}
        \caption{Various stationarities for MPECs}
        \label{fig:stationary}
    \end{figure}
	
	\begin{theorem}\textnormal{\cite{Scheel2000}}
		Let  $v^{*}$  be a local optimal solution of \eqref{mpec0}. Suppose that  the MPEC-LICQ  is satisfied at  $v^{*}$, then  $v^{*}$   is S-stationary.
	\end{theorem}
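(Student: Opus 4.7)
The plan is to reduce the problem to a classical KKT argument on the tightened NLP and then upgrade the resulting multiplier sign information from weak to strong stationarity via a descent-direction contradiction. Concretely, I would first observe that the feasible set of \eqref{tnlp} is, locally around $v^*$, a subset of the feasible set of \eqref{mpec0} (indeed, turning the inequalities $G_i(v)\ge 0$, $H_i(v)\ge 0$ into equalities on $I_G$, $I_H$, $I_{GH}$ only shrinks the region). Consequently, since $v^*$ is a local minimizer of \eqref{mpec0}, it is a fortiori a local minimizer of \eqref{tnlp}.

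Next, I would note that by Definition \ref{def-mfcq0}, MPEC-LICQ at $v^*$ means that the gradient family in \eqref{eq22} is linearly independent, which is exactly standard LICQ for \eqref{tnlp} at $v^*$. Applying classical NLP theory, the KKT conditions for \eqref{tnlp} hold at $v^*$, producing multipliers $\lambda_i\ge 0$ for $i\in I_g$ (extended by $\lambda_i=0$ on the inactive inequalities), $\mu_i\in\mathbb{R}$ for $i\in [p]$, $\gamma_i\in\mathbb{R}$ for $i\in I_G\cup I_{GH}$ (free, since the corresponding $G_i(v)=0$ are equality constraints in \eqref{tnlp}), and $\nu_i\in\mathbb{R}$ for $i\in I_H\cup I_{GH}$ (free for the same reason), together with $\gamma_i=0$ on $I_H$ and $\nu_i=0$ on $I_G$. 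Setting $\gamma_i=\nu_i=0$ for $i\notin I_G\cup I_H\cup I_{GH}$, the stationarity equation in Definition (weak stationarity) is satisfied, so $v^*$ is already weakly stationary.

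The main obstacle is to upgrade this to strong stationarity, i.e., to establish $\gamma_i\ge 0$ and $\nu_i\ge 0$ for every $i\in I_{GH}$. I would argue by contradiction: suppose some $i_0\in I_{GH}$ has $\gamma_{i_0}<0$. Using MPEC-LICQ, the gradient system in \eqref{eq22} is linearly independent, so there exists $d\in\mathbb{R}^n$ with
\begin{equation*}
\nabla g_i(v^*)^\top d=0\ (i\in I_g),\quad \nabla h_i(v^*)^\top d=0\ (i\in [p]),
\end{equation*}
\begin{equation*}
\nabla G_{i_0}(v^*)^\top d=1,\quad \nabla G_i(v^*)^\top d=0\ \bigl(i\in (I_G\cup I_{GH})\setminus\{i_0\}\bigr),\quad \nabla H_i(v^*)^\top d=0\ (i\in I_H\cup I_{GH}).
\end{equation*}
Because LICQ holds for \eqref{tnlp}, the implicit function theorem (applied to the equality constraints together with a suitable slack reformulation of the active inequalities) produces a smooth feasible arc $v(t)=v^*+t d+o(t)$ for the system $G_{i_0}(v)\ge 0$, $H_{i_0}(v)=0$, with all remaining \eqref{tnlp}--type constraints preserved. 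Since $\nabla G_{i_0}(v^*)^\top d=1>0$, this arc lies in the feasible region of \eqref{mpec0} for small $t>0$ (the complementarity on $i_0$ now takes the form $G_{i_0}>0$, $H_{i_0}=0$). Substituting $d$ into the weak-stationarity equation, all terms vanish except $\gamma_{i_0}\nabla G_{i_0}(v^*)^\top d$, giving
\begin{equation*}
\nabla f(v^*)^\top d=\gamma_{i_0}<0,
\end{equation*}
so $f(v(t))<f(v^*)$ for small $t>0$, contradicting local optimality of $v^*$ in \eqref{mpec0}. The symmetric argument with roles of $G$ and $H$ swapped yields $\nu_i\ge 0$ on $I_{GH}$, completing the proof.

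The technically delicate step is the construction of the feasible arc: verifying that the direction $d$ furnished by MPEC-LICQ can be lifted to a genuinely feasible curve for the original MPEC while leaving all other complementarity indices unperturbed. I expect this to be the bulk of the work, handled cleanly by an implicit function argument anchored on the linear independence guaranteed by MPEC-LICQ.
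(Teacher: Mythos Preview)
The paper does not provide its own proof of this theorem; it is simply cited from \cite{Scheel2000} as a known result, so there is no argument in the paper to compare against.

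Your proposal is correct and is essentially the standard proof of this fact. A couple of minor remarks. First, the inclusion of the feasible set of \eqref{tnlp} in that of \eqref{mpec0} is global, not just local, so you can drop the qualifier ``locally around $v^*$''. Second, in the arc construction you do not need to invoke the implicit function theorem with slack variables: it is cleaner to apply LICQ to the auxiliary NLP obtained from \eqref{tnlp} by relaxing $G_{i_0}(v)=0$ to $G_{i_0}(v)\ge 0$ (equivalently, one of the branch NLPs associated with the partition $P=\{i_0\}$, $Q=I_{GH}\setminus\{i_0\}$). Standard LICQ for that NLP follows from MPEC-LICQ, and the classical result that LICQ implies ACQ yields a feasible arc with tangent $d$; feasibility for \eqref{mpec0} then follows because this branch NLP's feasible set is contained in that of \eqref{mpec0}. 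With this formulation the ``technically delicate step'' you flag becomes routine.
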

    
	\begin{theorem}\textnormal{ \cite[Theorem 3.1]{JaneYe05}} \label{acq_m_station}
		Let  $v^{*}$  be a local optimal solution of \eqref{mpec0}. Suppose that MPEC-ACQ is satisfied at  $v^{*}$, then  $v^{*}$   is M-stationary.
	\end{theorem}{ \begin{theorem}\textnormal{\cite[Corollary 3.14]{ramos}}\label{ccp_m_station}
		Let  $v^{*}$  be a local optimal solution of \eqref{mpec0}. Suppose that MPEC-CCP is satisfied at  $v^{*}$, then  $v^{*}$   is M-stationary.
	\end{theorem}
    }
    { 
    By Theorem \ref{acq_m_station} and \ref{ccp_m_station} that implies MPEC-ACQ, as well as any CQ that implies MPEC-CCP, guarantees that every local minimizer of \eqref{mpec0} is M-stationary.
    }

    { 

Appropriate MPEC-CQs can guarantee the convergence of widely used relaxation
and regularization algorithms.  We collect three such schemes into a unified
framework.

For $t>0$ and a relaxation type
$\mathcal R\in\{\mathrm{SC},\mathrm{KS},\mathrm{KDB}\}$, 
consider the smooth nonlinear program
\begin{equation}\label{R-NLP}
\begin{array}{cl}
\displaystyle \min_{v\in\mathbb R^n} & f(v)\\[1mm]
\mathrm{s.t.} &
g_i(v)\le0,\quad i\in[m],\\
& h_i(v)=0,\quad i\in[p],\\
& \mathcal R_i(v;t)\le0,\quad i\in[l],
\end{array}
\tag{$\mathrm{NLP}^{\mathcal R}(t)$}
\end{equation}
where the relaxed complementarity system is specified componentwise by
\[
\mathcal R_i(v;t):=
\begin{cases}
\bigl(-G_i(v),\,-H_i(v),\,G_i(v)H_i(v)-t\bigr),
&\text{if }\mathcal R=\mathrm{SC},\\[1mm]
\bigl(-G_i(v),\,-H_i(v),\,\Phi_i(v;t)\bigr),
&\text{if }\mathcal R=\mathrm{KS},\\[1mm]
\bigl(-G_i(v)-t,\,-H_i(v)-t,\,(G_i(v)-t)(H_i(v)-t)\bigr),
&\text{if }\mathcal R=\mathrm{KDB},
\end{cases}
\]
where $$\Phi_i(v;t)
:=
\begin{cases}
\bigl(G_i(v)-t\bigr)\bigl(H_i(v)-t\bigr),
& G_i(v)+H_i(v)\ge 2t,\\[1mm]
-\dfrac12\Bigl(\bigl(G_i(v)-t\bigr)^2+\bigl(H_i(v)-t\bigr)^2\Bigr),
& G_i(v)+H_i(v)<2t.
\end{cases}$$
With this notation,  we provide a unified
relaxation algorithm as follows, where for $\mathcal R=\mathrm{SC},\ \mathrm{KS}\  \text{and} \ \mathrm{KDB}$, it corresponds to the relaxation methods developed by Scholtes \cite{Scholtes2001}, Kanzow and Schwartz \cite{KS13,Hoheisel2013}, Kadrani, Dussault, and Benchakroun \cite{KDB09}, respectively.

\begin{algorithm}[htbp]
\caption{Relaxation method for \eqref{mpec0}}\label{alg:R-relax}
\begin{algorithmic}[1]
\Require Choose a relaxation type $\mathcal R\in\{\mathrm{SC},\mathrm{KS},\mathrm{KDB}\}$ and an initial parameter $t^0>0$.
\State Set $k \gets 0$.
\While{a stopping criterion is not satisfied}
    \State Solve problem $\mathrm{NLP}^{\mathcal R}(t^k)$ to obtain a stationary point $v^k$.
    \State Choose a new parameter $t^{k+1}$ with $0 < t^{k+1} < t^k$.
    \State Set $k \gets k+1$.
\EndWhile
\end{algorithmic}
\label{algorithm1}
\end{algorithm}

For the three concrete choices of $\mathcal R$, the following convergence
results are available. See \cite[Theorem~5]{samuel}, \cite[Theorem~5.3]{ramos} and \cite[Theorem~5.1]{ramos} for details.

\begin{theorem}\label{rKS}
    Let $\{t_k\}\downarrow 0$ and  $\{v^{k}\}$ be generated by Algorithm \ref{algorithm1}. Let $v^k$ be a stationary point of $\mathrm{NLP}^{\mathcal R}(t_k)$ with $v^{k}\to v^*$. The following results hold:
    \begin{itemize}
         \item[$\left(\mathrm{i}\right)$] For $\mathcal R=\mathrm{SC}$, if the MPEC-MFCQ-R holds at $v^*$, then $v^*$ is a C-stationary point of \eqref{mpec0}.
         \item[$\left(\mathrm{ii}\right)$]  For $\mathcal R=\mathrm{KS}\ \text{or}\ \mathrm{KDB}$, if the MPEC-CCP holds at $v^*$, then $v^*$ is an M-stationary point of \eqref{mpec0}.
    \end{itemize}
\end{theorem}

In the next section, we take the bilevel hyperparameter optimization model for
L1-loss SVC as a practical application to illustrate how such CQs can be
verified and how the corresponding relaxed problems can be solved in practice.

}

	\section{Applications to Bilevel Hyperparameter Optimization}\label{sec4}
	
    In this section, we briefly review the BHO for L1-loss SVC and its corresponding MPEC reformulation \cite{ref1}, and then analyze the relevant CQs for such MPEC.

    { 
    Let $\Omega=\{(x_i,y_i)\}_{i=1}^{l_1}\subset\mathbb{R}^{n}\times\{\pm1\}$ be the dataset used for cross–validation. In $T$–fold cross–validation, $\Omega$ is partitioned (equally) into $T$ disjoint subsets. At the $t$-th iteration ($t=1,\dots,T$), the $t$-th subset $\Omega_t$ serves as the validation set and the remaining data $\bar\Omega_t:=\Omega\setminus\Omega_t$ serve as the training set. Denote $m_1:=|\Omega_t|$ and $m_2:=|\bar\Omega_t|$. We use index sets
    $\mathcal{V}_t\subset\{1,\dots,l_1\}\quad\text{and}\quad\mathcal{N}_t\subset\{1,\dots,l_1\}$ for the validation and training points in fold $t$, respectively.

    For each fold $t$, we train a soft-margin SVC on the training set $\bar\Omega_t$ with a shared hyperparameter $C\ge0$. Using the standard $l_1$ hinge loss, the training problem is to look for a hyperplane $x^\top w^t=0$ such that the hyperplane can separate the data $\bar\Omega_t$ as well as possible. That is, $w^t$ is the solution of the following problem 
    \begin{equation}
    \label{eq:train}
    w^t\in\arg\min_{w\in\mathbb{R}^n}\;
    \frac12\|w\|_2^2
    + C\sum_{i\in\mathcal{N}_t}\bigl(1-y_i x_i^\top w\bigr)_+,
    \end{equation}
    where $(\cdot)_+:=\max\{\cdot,0\}$.

    Given the trained $w^t$ in \eqref{eq:train}, a validation point $(x_i,y_i)$ is misclassified if $y_i x_i^\top w^t<0$. Counting misclassified validation points can therefore be written as
    \[
    \sum_{i\in\mathcal{V}_t}\bigl\|\,\bigl(-y_i x_i^\top w^t\bigr)_+\,\bigr\|_0,
    \]
    where $\|\cdot\|_0$ denotes the ``zero-norm'' that returns $1$ for a positive argument and $0$ otherwise. 
    }
    
     {Equivalently, each summand is an indicator that equals $1$ if $y_i(x_i^\top w^t)<0$ and $0$ if $y_i(x_i^\top w^t)\ge 0$, so the objective computes the average misclassification count across folds.} It is evident that  $\left\|(\cdot)_{+}\right\|_{0}$  is both discontinuous and nonconvex. However, as demonstrated by Mangasarian \cite{Mangasarian1994}, this function can be expressed as the minimum of the sum of all elements in the solution to the following linear optimization problem
	\begin{equation}\label{r0}
		\left\|r_{+}\right\|_{0}=\left\{\min \sum_{i=1}^{m_{1}} \zeta_{i} \ | \  \zeta=\underset{u}{\operatorname{argmin}}\left\{-u^{\top} r: \mathbf{0} \leq u \leq \mathbf{1}\right\}\right\} .
	\end{equation}

    In the special case where \( r_i = 0 \), the function \( \|(r_i)_+\|_0 \) evaluates to 0. However, the solution to the corresponding linear programming problem is not unique and can take any value within the interval \([0,1]\). To maintain the robustness and consistency of formulation \eqref{r0}, we assume throughout this paper that the SVC model assigns a distinct classification to all validation data points. More precisely, for each validation data point \( x_i \), where \( i \in [m_1] \), we impose the condition  $x_i^\top w^t \neq 0$, where \( w^t \) is the solution for the lower level problem in \eqref{eq31}, $t \in [T]$.
    
    { 
Putting the validation objective and the foldwise training subproblems together yields the bilevel program that selects the hyperparameter $C$:
\begin{equation}\label{eq31}
		\begin{aligned}
			\min \limits _{C \in \mathbb{R}, \ w^{t} \in \mathbb{R}^{n}, \  t=1,  \cdots, T} & \frac{1}{T}\sum_{t=1}^{T} \frac{1}{m_{1}} \sum_{i \in \mathcal{V}_{t}} \Bigl\| \left( -y_{i} \left( x_{i}^{\top} w^{t}\right) \right)_{+}\Bigr\|_{0}\\
			\hbox{s.t.} \quad \quad \quad \ & C \geq 0, \\
			\quad \quad \quad \ & \text{and for} \quad t=1, \cdots, T: \\
			\quad \quad & w^{t} \in \underset{ w \in \mathbb{R}^{n}}{\operatorname{argmin}} \left\{\frac{1}{2}\|w\|_{2}^{2}+ C \sum_{i \in {\mathcal{N}}_{t}}  \left( 1-y_{i}\left(x_{i}^{\top}w\right)\right)_{+}
			\right\}.
		\end{aligned}	
	\end{equation} 
In other words, the upper level chooses $C$ to minimize the average (across folds) validation misclassification count, while the lower level trains, for each fold, a standard soft–margin SVC with that common $C$ on the corresponding training subset.

}

    Let $Q_u=\{1,2,\ldots,Tm_1\}$ and $Q_l=\{1,2,\ldots,Tm_2\}$ be the stacked index sets for the validation and training samples, respectively. Replacing the lower level with its KKT conditions and using the reformulation of $\|\cdot\|_0$,  problem \eqref{eq31} can be reformulated  as follows
	\begin{equation} \label{eq15}
		\begin{aligned}
			& \min_{\begin{subarray}{c}
					C \in \mathbb{R} \\
					\zeta \in \mathbb{R}^{Tm_{1}}, \ z \in \mathbb{R}^{Tm_{1}}\\
					\alpha \in \mathbb{R}^{Tm_{2}}, \ \xi \in \mathbb{R}^{Tm_{2}}
			\end{subarray}}  \frac{1}{Tm_{1}} \mathbf{1}^{\top} \zeta\\
			& \quad  \quad \ \begin{array}{ll}\hbox{s.t.}\ \ \mathbf{0} \leq  \zeta \perp  A B^{\top} \alpha+z  \geq \mathbf{0},  \\
				\quad \quad \mathbf{0} \leq  z  \perp  \mathbf{1}-\zeta  \geq \mathbf{0},   \\
				\quad \quad \mathbf{0} \leq  \alpha  \perp  B B^{\top} \alpha- \mathbf{1}+\xi \geq \mathbf{0}, \\
				\quad \quad \mathbf{0} \leq  \xi  \perp  C \mathbf{1}-\alpha  \geq \mathbf{0}, 
			\end{array}
		\end{aligned}
	\end{equation}
	where $\zeta \in \mathbb{R}^{Tm_1}, \ z \in \mathbb{R}^{Tm_1}, \ A \in \mathbb{R}^{Tm_{1} \times {Tn}}, \ \alpha \in \mathbb{R}^{Tm_{2}}, \ \xi \in \mathbb{R}^{Tm_{2}}$,  and $B \in \mathbb{R}^{Tm_{2} \times Tn}$ are defined by
	$$
	\setlength{\abovedisplayskip}{3pt}
	\begin{array}{c}
		\zeta\!:=\!\left[\begin{array}{l} \zeta^{1} \\ \zeta^{2} \\ \vdots \\ \zeta^T
		\end{array}\right] , \ 
		z \!:=\!\left[\begin{array}{l} z^{1} \\ z^{2} \\ \vdots \\ z^T
		\end{array}\right] , \
		A:=\left[\begin{array}{cccc} A^{1} & \mathbf{0} & \cdots & \mathbf{0} \\ \mathbf{0} &A^{2} & \cdots& \mathbf{0} \\ \vdots &\vdots& \ddots & \vdots \\
			\mathbf{0} & \mathbf{0} & \cdots & A^T
		\end{array}\right], 
	\end{array}
	$$
	
	\begin{equation*} \label{eqvar_lower}
		\alpha:=\left[\begin{array}{l} \alpha^{1} \\ \alpha^{2}\\ \vdots \\ \alpha^T
		\end{array}\right], \ 
		\xi:=\left[\begin{array}{l} \xi^{1} \\ \xi^{2} \\ \vdots \\ \xi^T
		\end{array}\right], \ 
		  B:=\left[\begin{array}{cccc} B^{1} & \mathbf{0} & \cdots & \mathbf{0} \\ \mathbf{0} &B^{2} & \cdots& \mathbf{0} \\ \vdots &\vdots& \ddots & \vdots \\
			\mathbf{0} & \mathbf{0} & \cdots & B^T
		\end{array}\right], 
	\end{equation*}
	where 
	\[
        \begin{aligned}
        A^{t}&=\left[\begin{array}{c}
        y_{t_1} x_{t_1}^{\top} \\
        \vdots \\
        y_{t_{m_{1}}} x_{t_{m_{1}}}^{\top}
        \end{array}\right] \in \mathbb{R}^{m_{1} \times n}, \quad (x_{t_k}, y_{t_k}) \in \Omega_{t}, \\
        B^{t}&=\left[\begin{array}{c}
        y_{t_{m_{1}+1}} x_{t_{m_{1}+1}}^{\top} \\
        \vdots \\
        y_{t_{l_{1}}} x_{t_{l_{1}}}^{\top}
        \end{array}\right] \in \mathbb{R}^{m_{2} \times n}, \quad (x_{t_k}, y_{t_k}) \in \overline{\Omega}_{t}.
        \end{aligned}
    \]
    { {A detailed conversion of the above reformulation by replacing the lower-level problems with their KKT conditions and by using the standard complementarity reformulation of the $l_0$ term is provided in Appendix \ref{appendixa}.}}
    
	Problem \eqref{eq15} can be written in a compact form as follows:
	\begin{equation} \label{mpec}
		\begin{aligned}
			& \min_{v \in \mathbb{R}^{d}} f(v)\\
			& \quad \hbox{s.t.}\ \  \mathbf{0} \leq  H(v)  \perp  G(v)  \geq \mathbf{0}.
		\end{aligned}
        \tag{BHO-SVM}
	\end{equation}
	where $v = \left[C, \ \zeta^{\top}, \ z^{\top}, \ \alpha^{\top}, \ \xi^{\top}\right]^{\top}  \in  \mathbb{R}^{d}$
	with $d= 2 T (m_{1}+m_{2})+1$
	and  the functions $f:\mathbb{R}^{d} \to \mathbb{R}$,  $G:\mathbb{R}^{d} \to \mathbb{R}^{d-1}$,  $H:\mathbb{R}^{d} \to \mathbb{R}^{d-1}$ take the following form
	\begin{equation}\label{eqe1}
		f(v) =  c^{\top}v,    \ H(v)  =  \mathcal{Q}v,\ \mbox{ and }\ \ G(v) =  \mathcal{P}v+a,
	\end{equation}
	where
	$$
	c\!=\!\frac{1}{T m_{1}} \left[\begin{array}{c} 0\\ \mathbf{1}_{T m_1} \\ \mathbf{0}_{T m_1 } \\ \mathbf{0}_{T m_2 } \\ \mathbf{0}_{T m_2 } \end{array}\right] \! \in \! \mathbb{R}^{d}, \ 
	a \! = \! \left[\begin{array}{c} \mathbf{0}_{T m_1} \\ \mathbf{1}_{T m_1} \\-\mathbf{1}_{T m_2} \\ \mathbf{0}_{T m_2} \end{array}\right] \! \in \! \mathbb{R}^{d-1}, \ \mathcal{Q} \!= \!\left[\begin{array}{cc} \mathbf{0}_{d-1}& I_{d-1} \end{array}\right] \! \in \! \mathbb{R}^{(d-1) \times d}, 
	$$
	$$ \mathcal{P} \!= \!\left[\begin{array}{ccccc}\mathbf{0}_{T m_1 }& \mathbf{0}_{T m_1 \times T m_1} & I_{T m_1} & A B^{\top} & \mathbf{0}_{T m_1 \times T m_2} \\ \mathbf{0}_{T m_1} &-I_{T m_1 } & \mathbf{0}_{T m_1 \times T m_1} & \mathbf{0}_{T m_1 \times T m_2} & \mathbf{0}_{T m_1 \times T m_2}\\ \mathbf{0}_{T m_2} &\mathbf{0}_{T m_2 \times T m_1} & \mathbf{0}_{T m_2 \times T m_1} & B B^{\top} & I_{ T m_2} \\
		\mathbf{1}_{T m_2 } &\mathbf{0}_{T m_2 \times T m_1} & \mathbf{0}_{T m_2 \times T m_1} & -I_{T m_2 } & \mathbf{0}_{T m_2 \times T m_2 }\end{array}\right]  \! \in \! \mathbb{R}^{(d-1) \times d}. $$
	
	Compared with \eqref{mpec0}, the MPEC in \eqref{mpec} has the following properties.

	\begin{theorem}\label{acqequivalent}
		For \eqref{mpec}, the following results hold:
        \begin{itemize}
            {  
            \item[$\left(\mathrm{i}\right)$]  The constraint system of \eqref{mpec} satisfies  the calmness at all feasible points.
            \item [$\left(\mathrm{ii}\right)$]   MPEC-RCRCQ holds at all feasible points.
            \item [$\left(\mathrm{iii}\right)$] MPEC-CPLD holds at all feasible points.
            }
			\item[$\left(\mathrm{iv}\right)$]   MPEC-MFCQ-T holds at a feasible point $v^*$ if and only if    MPEC-LICQ holds at $v^*$.
		\end{itemize}
	\end{theorem}

    The proof of Theorem \ref{acqequivalent} is provided in Appendix \ref{appendixb}.
    { 
    \begin{remark}
        The assertions in Theorem~\ref{acqequivalent} are not restricted to problem~\eqref{mpec}.
        More precisely, if all constraint functions are affine, then
        items~$\left(\mathrm{i}\right)$ and $\left(\mathrm{ii}\right)$ hold automatically at every feasible point.
        For any MPEC that contains only complementarity constraints, items 
        $\left(\mathrm{iv}\right)$ remains valid.
    \end{remark}
	}

    By Theorem \ref{acqequivalent}, we know that, for problem \eqref{mpec}, MPEC-MFCQ-T coincides with the strongest classical CQ, MPEC-LICQ. Next,  we will mainly discuss the fulfillment of  MPEC-MFCQ-R and MPEC-LICQ for  $\eqref{mpec}$.

    \begin{theorem}\label{aplmfcqr}
    Let  $v^*$  be a feasible point of problem \eqref{mpec}. If $\left(B B^{\top}\right)_{\left(\Lambda_{1} \cup \Lambda_{3}, \Lambda_{1} \cup \Lambda_{3}\right)}$ is positive definite at $v^*$, then  the MPEC-MFCQ-R automatically holds at $ v^* $, where $\Lambda_1$ and  $\Lambda_3$ are defined by
    \begin{equation*}
        \begin{aligned}
			\Lambda_{1}&:=\left\{i \in Q_{l}\ \mid \ \alpha_{i}=0, \ (B B^{\top} \alpha-\mathbf{1}+\xi)_{i}=0, \ \xi_{i}=0\right\},  \\
			\Lambda_{3} &:=\left\{i \in Q_{l}\ \mid \ 0<\alpha_{i}\leq C, \ (B B^{\top} \alpha-\mathbf{1}+\xi)_{i}=0, \ \xi_{i}=0\right\}.
        \end{aligned}
    \end{equation*}
    \end{theorem}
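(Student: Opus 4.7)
Since \eqref{mpec} has no ordinary inequality or equality constraints, by Definition \ref{MFCQR} it suffices to show that the only multiplier tuple $\{\lambda_i^{G,(k)},\lambda_i^{H,(k)}\}_{k=1,\dots,4}$ obeying the usual MPEC-MFCQ-R sign conventions (nonnegative on $I_{GH}^{(k)}$; $\lambda_i^{G,(k)}=0$ on $I_H^{(k)}$; $\lambda_i^{H,(k)}=0$ on $I_G^{(k)}$; unrestricted otherwise) for which
\[
\sum_{k=1}^{4}\sum_{i}\bigl(\lambda_i^{G,(k)}\nabla G_i^{(k)}(v^*) + \lambda_i^{H,(k)}\nabla H_i^{(k)}(v^*)\bigr)=0
\]
is the trivial one. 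Here the superscript $(k)$ tags the four complementarity pairs in (BHO-SVM), namely $\zeta\perp AB^\top\alpha+z$, $z\perp\mathbf{1}-\zeta$, $\alpha\perp BB^\top\alpha-\mathbf{1}+\xi$, and $\xi\perp C\mathbf{1}-\alpha$. Projecting the above vanishing identity onto the $C$-, $\zeta_i$-, $z_i$-, $\alpha_i$- and $\xi_i$-blocks of $v$ yields the five coupled linear relations
\begin{equation*}
\begin{aligned}
\sum_i \lambda_i^{G,(4)} &= 0,\\
\lambda_i^{H,(1)} - \lambda_i^{G,(2)} &= 0,\qquad i\in Q_u,\\
\lambda_i^{G,(1)} + \lambda_i^{H,(2)} &= 0,\qquad i\in Q_u,\\
(BA^\top \lambda^{G,(1)})_i + (BB^\top \lambda^{G,(3)})_i + \lambda_i^{H,(3)} - \lambda_i^{G,(4)} &= 0,\qquad i\in Q_l,\\
\lambda_i^{G,(3)} + \lambda_i^{H,(4)} &= 0,\qquad i\in Q_l.
\end{aligned}
\end{equation*}

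The first step peels off the easy multipliers. Since $\lambda_i^{H,(1)}\neq 0$ requires $\zeta_i^*=0$ while $\lambda_i^{G,(2)}\neq 0$ requires $\zeta_i^*=1$, the $\zeta$-identity forces $\lambda^{H,(1)}\equiv\lambda^{G,(2)}\equiv 0$. A straightforward case analysis of the complementarity status of pairs $3$ and $4$, combined with the $\xi$-identity and the sign conventions, then shows that $\lambda_i^{G,(3)}=0$ for every $i\notin \Lambda_1\cup\Lambda_3$; consequently $\lambda^{G,(3)}$ is supported entirely in $\Lambda_1\cup\Lambda_3$, exactly where the assumed positive definiteness lives.

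The heart of the argument is to restrict the $\alpha$-identity to rows $i\in\Lambda_1\cup\Lambda_3$ and take the inner product against $\lambda^{G,(3)}_{\Lambda_1\cup\Lambda_3}$. On $\Lambda_1$ one has $\lambda_i^{G,(3)},\lambda_i^{H,(3)}\ge 0$ and $\lambda_i^{G,(4)}=0$; on $\Lambda_3$ one has $\lambda_i^{H,(3)}=0$ and, combining the $\xi$-identity with the sign conventions of $I_G^{(4)}$ and $I_{GH}^{(4)}$, $\lambda_i^{G,(3)}\lambda_i^{G,(4)}\le 0$. Putting these sign conditions together, the scalar identity reduces to
\[
\bigl(\lambda^{G,(3)}_{\Lambda_1\cup\Lambda_3}\bigr)^{\!\top}(BB^\top)_{(\Lambda_1\cup\Lambda_3,\,\Lambda_1\cup\Lambda_3)}\,\lambda^{G,(3)}_{\Lambda_1\cup\Lambda_3} \;\le\; -\bigl(\lambda^{G,(3)}_{\Lambda_1\cup\Lambda_3}\bigr)^{\!\top}(BA^\top\lambda^{G,(1)})_{\Lambda_1\cup\Lambda_3}.
\]
Under the hypothesis that $(BB^\top)_{(\Lambda_1\cup\Lambda_3,\,\Lambda_1\cup\Lambda_3)}$ is positive definite, showing the right-hand side to be nonpositive forces $\lambda^{G,(3)}_{\Lambda_1\cup\Lambda_3}=0$; a cascade through the remaining $\alpha$-, $\xi$-, $C$- and $z$-identities subsequently zeroes out $\lambda^{H,(3)}$, $\lambda^{G,(4)}$, $\lambda^{H,(4)}$, and finally $\lambda^{G,(1)},\lambda^{H,(2)}$.

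The main technical obstacle is therefore controlling the cross-term $(BA^\top\lambda^{G,(1)})_{\Lambda_1\cup\Lambda_3}$ that couples the validation block (pair $1$) to the training block (pair $3$). I would attack it by substituting $\lambda^{G,(1)}=-\lambda^{H,(2)}$ from the $z$-identity and exploiting the support characterization of these multipliers---namely that $\lambda_j^{G,(1)}$ can be nonzero only at indices $j$ with $z_j^*=0$ and $(AB^\top\alpha^*+z^*)_j=0$---together with the additional sign constraints inherited from the subcases $\zeta_j^*=0$, $\zeta_j^*\in(0,1)$, and $\zeta_j^*=1$. This structural case analysis, combined with the bilinear coupling between the validation matrix $A$ and the training matrix $B$, should yield $\bigl(\lambda^{G,(3)}_{\Lambda_1\cup\Lambda_3}\bigr)^{\!\top}(BA^\top\lambda^{G,(1)})_{\Lambda_1\cup\Lambda_3}\ge 0$, closing the argument.
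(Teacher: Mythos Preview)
Your overall dual/multiplier framework and the sign analysis on $\Lambda_1$, $\Lambda_3^+$, $\Lambda_3^c$ are set up correctly, and your quadratic-form argument via positive definiteness of $(BB^\top)_{(\Lambda_1\cup\Lambda_3,\Lambda_1\cup\Lambda_3)}$ is exactly the mechanism the paper ultimately invokes (it cites \cite{li2023} for that step). The gap is in what you call the ``main technical obstacle'': the cross-term $(BA^\top\lambda^{G,(1)})_{\Lambda_1\cup\Lambda_3}$ is not an obstacle at all, because $\lambda^{G,(1)}=0$ outright --- and this should be established \emph{before} the quadratic-form step, not after it.

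Concretely, the $z$-identity gives $\lambda^{G,(1)}=-\lambda^{H,(2)}$, and you correctly observe that any index in the common support must satisfy $z_j^*=0$ and $(AB^\top\alpha^*+z^*)_j=0$. But under the paper's standing assumption that every validation point is strictly classified ($x_i^\top w^t\neq 0$, i.e.\ $(AB^\top\alpha^*)_i\neq 0$), those two conditions are incompatible, so the common support is empty. Equivalently, in the paper's notation $I_{GH_1}=I_{GH_2}=\emptyset$ (Proposition~\ref{proIghpl}), hence $\lambda^{G,(1)}$ lives on $I_{G_1}=\Psi_3$ (where $z_j^*>0$) while $\lambda^{H,(2)}$ lives on $I_{H_2}=\Psi_2$ (where $z_j^*=0$), and these are disjoint. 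The paper reads this off in one line from the block $S_3=[\rho_d^\top,\rho_a^\top]=0$. Your proposed route through ``additional sign constraints inherited from the subcases $\zeta_j^*=0,\ \zeta_j^*\in(0,1),\ \zeta_j^*=1$'' and ``bilinear coupling between $A$ and $B$'' is both unnecessary and not viable: for arbitrary data matrices $A,B$ and an \emph{unrestricted-sign} multiplier $\lambda^{G,(1)}$ (it is free on $I_{G_1}$), there is no general reason for $(\lambda^{G,(3)})^\top BA^\top\lambda^{G,(1)}\ge 0$. Once you insert $\lambda^{G,(1)}=0$ first, your inequality collapses to $(\lambda^{G,(3)})^\top(BB^\top)_{(\Lambda_1\cup\Lambda_3,\Lambda_1\cup\Lambda_3)}\lambda^{G,(3)}\le 0$, positive definiteness gives $\lambda^{G,(3)}=0$, and the cascade you describe then goes through.
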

    The proof for Theorem \ref{aplmfcqr} is included in Appendix \ref{appendixc}.

    To state the MPEC-LICQ result, we introduce the index sets
    \begin{equation*} \label{eq23}
		\begin{aligned}
			& I_{GH_{3}} :=\left\{i \in Q_{l} \ \mid \ \alpha_{i}=0, \ (BB^{\top} \alpha -\mathbf{1}+\xi)_{i}=0\right\},  \\
			& I_{GH_{4}} :=\left\{i \in Q_{l} \ \mid \ \xi_{i}=0, \ C-\alpha_{i}=0\right\},\\
            &\Lambda^{+}_{3} :=\left\{i \in Q_{l}\ \mid \ 0<\alpha_{i}<C, \ (B B^{\top} \alpha-\mathbf{1}+\xi)_{i}=0, \ \xi_{i}=0\right\},   \\
			&\Lambda^{c}_{3} := \left\{i \in Q_{l}\ \mid \ \alpha_{i}=C, \ (B B^{\top} \alpha-\mathbf{1}+\xi)_{i}=0, \ \xi_{i}=0\right\},  \\
			&\Lambda_{u}:=\left\{i \in Q_{l}\ \mid \ \alpha_{i}=C, \ (B B^{\top} \alpha-\mathbf{1}+\xi)_{i}=0, \ \xi_{i}>0\right\}.
		\end{aligned}
	\end{equation*}

    { 
    \begin{theorem}\label{LICQ_new}
		Let  $v^*$  be a feasible point of problem \eqref{mpec}.
		\begin{itemize}
			\item[$\left(\mathrm{i}\right)$] If  $\left | I_{GH} \right | > 1 $, then  the MPEC-LICQ fails at $ v^* $.
			\item[$\left(\mathrm{ii}\right)$] If  $\left | I_{GH} \right | =0 $, then  the MPEC-LICQ holds at $ v^* $.
			\item[$\left(\mathrm{iii}\right)$] If $\left |  I_{GH}\right |=1$,   $(BB^{\top})_{(\Lambda_{3}^+, \Lambda_3^+)}$ is positive definite at $v^*$ and $\Gamma_{sub} \neq 0$, then  the MPEC-LICQ holds at $ v^* $, where
            \begin{equation*}
                \Gamma_{\mathrm{sub}}=\begin{bmatrix}(BB^\top)_{(I_{GH_4},\Lambda_{3}^c\cup\Lambda_{u})}\,\mathbf 1
 - (BB^\top)_{(I_{GH_4},\Lambda_{3}^+)}\,
   (BB^\top)_{(\Lambda_{3}^+,\Lambda_{3}^+)}^{-1}\,
   (BB^\top)_{(\Lambda_{3}^+,\Lambda_{3}^c\cup\Lambda_{u})}\,\mathbf 1\\
(BB^\top)_{(I_{GH_3},\Lambda_{3}^c\cup\Lambda_{u})}\,\mathbf 1
 - (BB^\top)_{(I_{GH_3},\Lambda_{3}^+)}\,
   (BB^\top)_{(\Lambda_{3}^+,\Lambda_{3}^+)}^{-1}\,
   (BB^\top)_{(\Lambda_{3}^+,\Lambda_{3}^c\cup\Lambda_{u})}\,\mathbf 1
                \end{bmatrix}.
            \end{equation*}

		\end{itemize}
	\end{theorem}
    
    We provide the complete proof of Theorem \ref{LICQ_new} in Appendix \ref{appendixd}.    
    }
    \begin{remark}
		Note that  $\left | I_{GH} \right | =0$ indicates that strict complementarity  conditions hold at $v^*$.  
		From the numerical point of view,  given a specific feasible point $v^*$ of $\eqref{mpec}$,  it may be very often that strict complementarity  conditions hold at $v^*$,  since it may be very rare that numerically two terms $H_i(v)$ and $G_i(v)$ are zero at the same time.
	\end{remark}

    { 
    Figure \ref{cqrelationforsvm} summarizes the  relations among the constraint qualifications (CQs) used in this paper for the MPEC model \eqref{mpec}. Since calmness, MPEC-CPLD, and MPEC-RCRCQ have been shown to hold at all feasible points of (BHO-SVM), any CQ implied by these properties is automatically satisfied; hence the corresponding implied branches in Figure \ref{cqsrelation} are omitted in Figure \ref{cqrelationforsvm} for clarity. 
    }

\begin{figure}[H]
\centering
\begin{tikzpicture}[
  >=Stealth, font=\small,
  cq/.style={
    draw, rounded corners, thick, align=center,
    minimum width=3.6cm, minimum height=1.05cm, inner sep=4pt
  },
  arr/.style={thick, shorten >=2pt, shorten <=2pt}
]

\node[cq] (mfcqt) {\textbf{MPEC-MFCQ-T}};
\node[cq, above=1.55cm of mfcqt] (licq)
  {\textbf{MPEC-LICQ}\\ \footnotesize Theorem \ref{LICQ_new}};

\node[cq, right=3.2cm of licq] (rcrcq)
  {\textbf{MPEC-RCRCQ}\\ \footnotesize Theorem \ref{acqequivalent}. $(ii)$};

\node[cq, right=3.2cm of mfcqt] (cpld)
  {\textbf{MPEC-CPLD}\\ \footnotesize Theorem \ref{acqequivalent}. $(iii)$};

\node[cq, above=0.25cm of cpld] (calm)
  {\textbf{Calmness}\\ \footnotesize Theorem \ref{acqequivalent}. $(i)$};

\node[cq, below=0.25cm of cpld] (mfcqr)
  {\textbf{MPEC-MFCQ-R}\\ \footnotesize Theorem \ref{aplmfcqr}};

\draw[arr,<->] (licq.south) -- node[right]{\scriptsize Theorem \ref{acqequivalent}. $(iv)$} (mfcqt.north);
\draw[arr,->]  (licq.east) -- (rcrcq.west);

\path (mfcqt.east) ++(1.25,0) coordinate (branch);

\draw[thick] (mfcqt.east) -- (branch);

\draw[thick,->,shorten <=0pt,shorten >=2pt] (branch) |- (calm.west);
\draw[thick,->,shorten <=0pt,shorten >=2pt] (branch) |- (cpld.west);
\draw[thick,->,shorten <=0pt,shorten >=2pt] (branch) |- (mfcqr.west);

\end{tikzpicture}
\caption{Constraint qualifications for \eqref{mpec}}
\label{cqrelationforsvm}
\end{figure}

{ 
To summarize, for \eqref{mpec}, calmness property, MPEC-CPLD  and MPEC-RCRCQ hold automatically at all feasible points. Under certain conditions, MPEC-LICQ, MPEC-MFCQ-T, and MPEC-MFCQ-R hold. These results provide guidance for selecting solution methods. For example, one can use the relaxation algorithm of Kanzow and Schwartz to solve \eqref{mpec}, and the convergence conclusion in Theorem~\ref{rKS} holds automatically, since MPEC-CPLD implies MPEC-CCP, MPEC-CCP also holds at all feasible points.
}

	\section{Conclusions}\label{sec-6}
	In this paper, { we study several types of constraint qualifications (such as MPEC-LICQ, MPEC-MFCQ-T, NNAMCQ, MPEC-MFCQ-R etc.) developed for MPECs, and provide a thorough theoretical analysis of their relationships.} Then we address the hyperparameter selection problem for support vector classification,  reformulating it as a mathematical program with equilibrium constraints (MPEC) and analyzing the relevant constraint qualifications (CQs). The study highlights the role of positive linear independence and demonstrates the equivalence of  the MPEC-MFCQ-T and  the MPEC-LICQ for this problem. Furthermore, we establish refined conditions under which  the MPEC-LICQ holds and identify specific cases where it may fail.  The core contribution lies in providing a more robust theoretical foundation for complex bilevel optimization models,  especially in the context of hyperparameter optimization in modern machine learning.
	Future research could explore the extension of the applicability of MPEC-related CQs and further investigate their properties in more complex models.

    \vskip 1cm
	Disclosure statement:
	There are no relevant financial or non-financial competing interests to report.

	\appendix\label{sec-5}
    { 
    \section{Transformation of the Bilevel Problem into the MPEC Formulation}\label{appendixa}
We aim to select the optimal hyperparameter $C$ for soft-margin support vector classification (SVC) using $T$-fold cross-validation. The resulting model is formulated as a bilevel optimization problem.

For each fold $t = 1, \dots, T$, the lower-level problem is a soft-margin SVC trained on the training set $\bar{\Omega}_t$. The optimization problem is
\begin{equation*}
w^t \in \arg\min_{w \in \mathbb{R}^n} \left\{ \frac{1}{2} \|w\|_2^2 + C \sum_{i \in \mathcal{N}_t} \left(1 - y_i (x_i^\top w)\right)_+ \right\}.
\end{equation*}

This problem can be reformulated with slack variables and KKT conditions. Letting $B^t$ represent the training data matrix, and $\xi^t, \alpha^t, \mu^t$ the corresponding slack and dual variables, we obtain the reduced KKT system:
\begin{align*}
&0 \leq \alpha^t \perp B^t w^t - 1 + \xi^t \geq 0, \\
&0 \leq \xi^t \perp C \mathbf{1} - \alpha^t \geq 0, \\
&w^t = (B^t)^\top \alpha^t.
\end{align*}

The upper-level goal is to minimize the cross-validation (CV) error on the validation sets $\Omega_t$ using the trained models $w^t$. The number of misclassified points is measured by the following function
\begin{equation*}
\min_{C \in \mathbb{R}} \frac{1}{T} \sum_{t=1}^\top \frac{1}{m_1} \left\| (-A^t w^t)_+ \right\|_0.
\end{equation*}

Here, $A^t$ is the matrix formed by validation data, and $\|\cdot\|_0$ counts misclassified points. Since the $l_0$ norm is discontinuous and nonconvex, it is reformulated using an auxiliary  linear programming:
\begin{equation*}
    \begin{aligned}
    \min_{\zeta^t \in \mathbb{R}^{m_1}} \quad & \sum_{i=1}^{m_1} \zeta^t_i \\
    \text{s.t.} \quad & 0 \leq \zeta^t \leq 1, \\
    & \zeta^t_i = 1 \text{ if } (x_i, y_i) \text{ is misclassified}, \\
    & \zeta^t_i = 0 \text{ otherwise}.
    \end{aligned}
\end{equation*}

This linear programming can be equivalently expressed via its KKT conditions with auxiliary dual variables $\lambda^t, z^t$
\begin{align*}
0 \leq \zeta^t &\perp A^t w^t - \lambda^t + z^t \geq 0, \\
0 \leq z^t &\perp 1 - \zeta^t \geq 0.
\end{align*}

Substituting $w^t = (B^t)^\top \alpha^t$ as in the lower-level KKT conditions, the entire bilevel problem can be flattened into a single-level Mathematical Program with Equilibrium Constraints (MPEC):
\begin{equation}\label{ampec}
\begin{aligned}
\min_{C \in \mathbb{R},\, \zeta,\, z,\, \alpha,\, \xi} \quad & \frac{1}{T m_1} \mathbf{1}^\top \zeta, \\
\text{s.t.} \quad
& 0 \leq \zeta \perp A B^\top \alpha + z \geq 0, \\
& 0 \leq z \perp \mathbf{1} - \zeta \geq 0, \\
& 0 \leq \alpha \perp B B^\top \alpha - 1 + \xi \geq 0, \\
& 0 \leq \xi \perp C \mathbf{1} - \alpha \geq 0.
\end{aligned}
\end{equation}

Here, the matrices $A$ and $B$ are block-diagonal aggregations of all $A^t$ and $B^t$, respectively, and $\zeta, z, \alpha, \xi$ are concatenated vectors over all folds $t = 1, \dots, T$.

\eqref{ampec} is the MPEC  version of the original bilevel problem, suitable for further theoretical analysis or numerical solution.}

    \section{Proof of Theorem \ref{acqequivalent}}\label{appendixb}
    \begin{proof}
			{   $\left(\mathrm{i}\right)$ By Remark \ref{affinecalmness},   the constraint system of \eqref{mpec} satisfies calmness at all the feasible point, since all functions in \eqref{mpec} are affine linear.}
            
        \vspace{2.5mm}
        {  \noindent $\left(\mathrm{ii}\right)$ 
        When all constraint functions are affine functions of $v$, the MPEC–RCRCQ condition is automatically satisfied at every feasible point, since all gradients are constant and the ranks of all relevant gradient families do not change in a neighborhood.}

        \vspace{2.5mm}
    {  \noindent $\left(\mathrm{iii}\right)$ Fix an arbitrary feasible point $v^*$ of \eqref{mpec}.  
    Since the problem contains no functions $g$ and $h$, by Definition \ref{def_MPECCPLD}, the positively linearly dependence of \eqref{def-mpec-cpld} reduces to the linearly dependence of the following  set of gradients 
\[
\bigl\{\nabla G_i(v^*) \mid i\in I_3\bigr\} \cup\bigl\{\nabla H_i(v^*) \mid i\in I_4\bigr\},
\ 
I_3\subseteq I_{GH}\cup I_G,\ I_4\subseteq I_{GH}\cup I_H.
\]

Then there exist scalars
$\{\lambda_i\}_{i\in I_3}$ and $\{\mu_i\}_{i\in I_4}$, not all zero, such that
\begin{equation}\label{eq:cpld-lindep}
\sum_{i\in I_3}\lambda_i \nabla G_i(v^*) \;+\; \sum_{i\in I_4}\mu_i \nabla H_i(v^*) \;=\; 0.
\end{equation}
Since $G$ and $H$ are affine, their Jacobians are constant, i.e.,
$\nabla G_i(v)=\nabla G_i(v^*)$ and $\nabla H_i(v)=\nabla H_i(v^*)$ for all $v$. Therefore, the identity \eqref{eq:cpld-lindep} holds
for every $v$ in a neighbourhood of $v^*$, namely,
\[
\sum_{i\in I_3}\lambda_i \nabla G_i(v) \;+\; \sum_{i\in I_4}\mu_i \nabla H_i(v) \;=\; 0.
\]

Since the choice of $v^*$ was arbitrary, MPEC-CPLD
holds at every feasible point.
    }

        \vspace{2.5mm}
	\noindent $\left(\mathrm{iv}\right)$ Since \eqref{mpec} does not have inequality constraints  $g $, according to Definition \ref{def-mfcq0} and Definition \ref{def-licq0},  the MPEC-LICQ and  the MPEC-MFCQ-T are equivalent for \eqref{mpec} at all the feasible point.
\end{proof}

	\section{Proof of Theorem \ref{aplmfcqr}}\label{appendixc}
	
    Recall $I_{H}:=\underset{k=1} {\overset{4}{\cup}}I_{H_{k}}$, $ \ I_{G}:=\underset{k=1} {\overset{4}{\cup}}I_{G_{k}} $ and $I_{GH}:=\underset{k=1} {\overset{4}{\cup}}I_{GH_{k}}$ in \cite{ref1}, where
	\begin{subequations} \label{eq231}
		\begin{align*}
			& I_{H_{1}} :=  \left\{i \in Q_{u}  \mid  \zeta_{i}=0, \ (AB^{\top} \alpha +z)_{i}>0\right\},  \ I_{H_{2}} := \left\{i \in Q_{u}  \mid  z_{i}=0, \ 1-\zeta_{i}>0\right\},   \\
			& I_{H_{3}}  :=  \left\{i \in Q_{l}  \mid  \alpha_{i}=0, \ (BB^{\top} \alpha -\mathbf{1}+\xi)_{i}>0\right\},  \ I_{H_{4}}  :=  \left\{i \in Q_{l}  \mid  \xi_{i}=0, \ C-\alpha_{i}>0\right\},  \\
			& I_{G_{1}}:=\left\{i \in Q_{u}  \mid \zeta_{i}>0, \ (AB^{\top} \alpha +z)_{i}=0\right\}, \ I_{G_{2}}  :=\left\{i \in Q_{u}  \mid  z_{i}>0, \ 1-\zeta_{i}=0\right\}, \\
			& I_{G_{3}} := \left\{i \in Q_{l}  \mid \alpha_{i}>0, \ (BB^{\top} \alpha -\mathbf{1}+\xi)_{i}=0\right\}, \ I_{G_{4}}:= \left\{i \in Q_{l} \mid \xi_{i}>0, \ C-\alpha_{i}=0\right\},\\
            & I_{G H_{1}}  :=\left\{i \in Q_{u} \mid \zeta_{i}=0,\left(A B^{\top} \alpha+z\right)_{i}=0\right\}, \ I_{G H_{2}} :=\left\{i \in Q_{u} \mid z_{i}=0,1-\zeta_{i}=0\right\}, \\
            &I_{G H_{3}}  :=\left\{i \in Q_{l} \mid \alpha_{i}=0,\left(B B^{\top} \alpha-\mathbf{1}+\xi\right)_{i}=0\right\}, \ I_{G H_{4}}  :=\left\{i \in Q_{l} \mid \xi_{i}=0, C-\alpha_{i}=0\right\} .  
		\end{align*}
	\end{subequations}

	To provide a clearer description of the assumptions,  we need  the definitions for the following index sets.
		\begin{eqnarray*}
			\Lambda_{2}&:=&\left\{i \in Q_{l} \mid  \alpha_{i}=0, \ (B B^{\top} \alpha-\mathbf{1}+\xi)_{i}>0, \ \xi_{i}=0\right\},  \label{eq6b}\\
			\Lambda_{3}&:=&\left\{i \in Q_{l} \mid  0< \alpha_{i} \leq C, \ (B B^{\top} \alpha-\mathbf{1}+\xi)_{i}=0, \ \xi_{i}=0\right\}, \label{eqLambda3}\\
			\Psi_{2}&:=&\left\{i \in Q_{u} \mid  \zeta_{i}=0, \ (A B^{\top} \alpha+z)_{i}>0, \ z_{i}=0\right\},  \label{eq7b}\\
			\Psi_{3}&:=&\left\{i \in Q_{u} \mid \zeta_{i}=1, \ (A B^{\top} \alpha+z)_{i}=0, \ z_{i}>0\right\} . \label{eq7d}
		\end{eqnarray*}
	
    \begin{proposition}\label{proIghpl} \label{pro1}
		The relationship between index sets\\
		(a) $I_{H_{1}}=\Psi_{2}, \  I_{G_{1}}= \Psi_{3}, \ I_{G H_{1}}=\emptyset.$\\
		(b)  $I_{H_{2}}=\Psi_{2}, \ I_{G_{2}}=\Psi_{3}, \  I_{G H_{2}}=\emptyset.$\\
		(c)  $I_{H_{3}}=\Lambda_{2}, \ I_{G_{3}}=\Lambda_{3} \cup \Lambda_{u}, \ I_{G H_{3}}=\Lambda_{1}.$\\
		(d) $I_{H_{4}}=\Lambda_{1} \cup \Lambda_{2} \cup \Lambda_{3}^{+}, \ I_{G_{4}}=\Lambda_{u}, \ I_{G H_{4}}=\Lambda_{3}^{c} $.
	\end{proposition}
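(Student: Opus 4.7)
The idea is that although the sets \(I_{H_{k}},I_{G_{k}},I_{GH_{k}}\) are defined using only the \(k\)-th complementarity block, the constraints of the remaining blocks---together with the sign constraints on \((\zeta,z,\alpha,\xi)\) and the standing assumption \(x_{i}^{\top}w^{t}\ne 0\) (equivalently \((AB^{\top}\alpha)_{i}\ne 0\) for validation samples)---automatically propagate extra information that identifies each general index set with its refined counterpart among \(\Psi_{2},\Psi_{3},\Lambda_{1},\Lambda_{2},\Lambda_{3}^{+},\Lambda_{3}^{c},\Lambda_{u}\). I would prove the four parts separately by expanding each defining condition and chasing the cross-block implications.

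For part (a), since \(I_{H_{1}}=\{i\in Q_{u}:\zeta_{i}=0,\,(AB^{\top}\alpha+z)_{i}>0\}\) while \(\Psi_{2}\) adds the constraint \(z_{i}=0\), it suffices to verify that \(\zeta_{i}=0\Rightarrow z_{i}=0\); this is immediate from the block-2 complementarity \(z_{i}(1-\zeta_{i})=0\), since \(\zeta_{i}=0\) gives \(1-\zeta_{i}=1>0\). The identity \(I_{G_{1}}=\Psi_{3}\) is where the standing assumption enters: if \(\zeta_{i}>0\) and \((AB^{\top}\alpha+z)_{i}=0\), then \(z_{i}=-(AB^{\top}\alpha)_{i}\), and combining \(z_{i}\ge 0\) with \((AB^{\top}\alpha)_{i}\ne 0\) forces \(z_{i}>0\), which through block-2 complementarity yields \(\zeta_{i}=1\). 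Finally, \(I_{GH_{1}}=\emptyset\) because \(\zeta_{i}=0\) already forces \(z_{i}=0\), and then \((AB^{\top}\alpha+z)_{i}=0\) would contradict \((AB^{\top}\alpha)_{i}\ne 0\). Part (b) is entirely symmetric, pivoting on block 1 rather than block 2 and using the same standing assumption to rule out \(I_{GH_{2}}\).

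Parts (c) and (d) are analogous, with the roles of \(\zeta,z\) played by \(\alpha,\xi\) and the cross-block links given by \(\alpha_{i}(BB^{\top}\alpha-\mathbf{1}+\xi)_{i}=0\) and \(\xi_{i}(C-\alpha_{i})=0\). A representative deduction for (c): the equality \(I_{H_{3}}=\Lambda_{2}\) follows because \(\alpha_{i}=0\) combined with block-4 complementarity and \(C>0\) yields \(\xi_{i}=0\); the equality \(I_{G_{3}}=\Lambda_{3}\cup\Lambda_{u}\) is a case split on whether \(\xi_{i}=0\) (giving \(\Lambda_{3}\)) or \(\xi_{i}>0\) (which via block 4 forces \(\alpha_{i}=C\), giving \(\Lambda_{u}\)); and \(I_{GH_{3}}=\Lambda_{1}\) follows by further intersecting with \((BB^{\top}\alpha-\mathbf{1}+\xi)_{i}=0\). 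For part (d), a case split on \(\alpha_{i}\in\{0\}\cup(0,C)\cup\{C\}\) combined with block-3 complementarity (which forces \((BB^{\top}\alpha-\mathbf{1}+\xi)_{i}=0\) whenever \(\alpha_{i}>0\)) produces the stated decompositions of \(I_{H_{4}},I_{G_{4}},I_{GH_{4}}\).

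\textbf{Main obstacle.} The argument is essentially a careful case analysis, and the subtle point lies in pinpointing which of the refined sets are forced to be empty. The claims \(I_{GH_{1}}=I_{GH_{2}}=\emptyset\) depend crucially on the problem-specific standing assumption \(x_{i}^{\top}w^{t}\ne 0\) for validation samples, a modeling choice rather than a consequence of the complementarity system itself. One must also implicitly invoke \(C>0\) in parts (c)--(d) so that \(\alpha_{i}=0\) forces \(\xi_{i}=0\) via block 4; the degenerate case \(C=0\) would collapse several of these identities.
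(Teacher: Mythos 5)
Your argument is correct, and the representative deductions all check out against the complementarity system: block~2 forces $z_i=0$ whenever $\zeta_i=0$, block~1 forces $(AB^{\top}\alpha+z)_i=0$ whenever $\zeta_i>0$, blocks~3--4 play the analogous roles for $(\alpha,\xi)$, and the classification assumption $x_i^{\top}w^{t}\neq 0$ (equivalently $(AB^{\top}\alpha)_i\neq 0$) is exactly what empties $I_{GH_1}$, $I_{GH_2}$ and upgrades the remaining memberships. The difference from the paper is one of self-containedness rather than of idea: the paper does not redo this case analysis, it only observes that the assumption makes the degenerate index sets empty (the set $\{i\in Q_u \mid 0\le\zeta_i<1,\ (AB^{\top}\alpha+z)_i=0,\ z_i=0\}$ and the set $\Psi_1$ of \cite{ref1}) and then invokes \cite[Proposition 5]{ref1}, which already contains the partition you reconstruct by hand; your route buys independence from \cite{ref1} at the cost of carrying out the case chase explicitly. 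Two of your caveats can be sharpened. First, $C>0$ is not an additional hypothesis: at any feasible point satisfying the classification assumption one must have $\alpha\neq 0$ (otherwise $w^{t}=(B^{t})^{\top}\alpha^{t}=0$ and $x_i^{\top}w^{t}=0$ for all validation points), hence $C\ge\alpha_i>0$ for some $i$, so your uses of block~4 ($\alpha_i=0\Rightarrow\xi_i=0$) and block~3 ($\alpha_i=C>0\Rightarrow(BB^{\top}\alpha-\mathbf{1}+\xi)_i=0$) are fully justified and the degenerate case $C=0$ simply cannot occur. Second, in part (b) the standing assumption is needed not only for $I_{GH_2}=\emptyset$ but also for $I_{H_2}=\Psi_2$ (to exclude indices with $z_i=0$ and $(AB^{\top}\alpha)_i=0$, which is precisely the empty set the paper's proof singles out); your phrase ``entirely symmetric'' covers this, but it deserves to be made explicit.
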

    \begin{proof}
    Based on our discussion after \eqref{r0}, 
    $x_i^\top w^t \neq 0$ for each validation data point $ x_i$, where $ i \in [m_1]$, $t \in [T]$.
    It implies that index set $\left\{i \in Q_{u} \mid 0 \leq \zeta_{i}<1,\left(A B^{\top} \alpha+z\right)_{i}=0, z_{i}=0\right\}=\emptyset$. Likewise, the index set $\Psi_1$ in \cite{ref1} is also empty. By incorporating the content of  \cite[Proposition 5]{ref1}, the proof is complete.
    \end{proof}

	To get the  conclusions about  the MPEC-MFCQ-R and  the MPEC-LICQ at a feasible point $v^*$ for the  $\eqref{mpec}$, we need to analyze the properties of the following set of gradient vectors
	\begin{equation}\label{de3.1}
		\left\{\nabla G_{i}(v) \mid i \in I_{G} \cup I_{G H}\right\} \cup\left\{\nabla H_{i}(v) \mid i \in I_{H} \cup I_{G H}\right\}.
	\end{equation}

	\begin{proposition}\label{propab}
		  \cite[Proposition 6]{ref1} The set of gradient vectors in $\eqref{de3.1}$ at a feasible point $v^*$ for the $\eqref{mpec}$ can be written in the matrix form

        { 
        
\begin{equation}\label{gamma}
	\Gamma=
	\begin{blockarray}{*{5}{c}c}
		\scriptstyle 1 & \scriptstyle 2 & \scriptstyle 3 & \scriptstyle 4 & \scriptstyle 5 & \\[-0.2ex]
		\begin{block}{[*{5}{c}]c}
        \mathbf{0}_{(I_{G_{1}},L_{1})} & \mathbf{0}_{(I_{G_{1}},L_{2})} &
			\mathcal{I}_{(I_{G_{1}},\cdot)} & (AB^{\top})_{(I_{G_{1}},\cdot)} &
			\mathbf{0}_{(I_{G_{1}},L_{5})} & \scriptstyle 1\\
			\mathbf{0}_{(I_{H_{1}},L_{1})} & \mathcal{I}_{(I_{H_{1}},\cdot)} &
			\mathbf{0}_{(I_{H_{1}},L_{3})} & \mathbf{0}_{(I_{H_{1}},L_{4})} &
			\mathbf{0}_{(I_{H_{1}},L_{5})} & \scriptstyle 2\\
			\mathbf{0}_{(I_{G_{2}},L_{1})} & \mathcal{I}_{(I_{G_{2}},\cdot)} &
			\mathbf{0}_{(I_{G_{2}},L_{3})} & \mathbf{0}_{(I_{G_{2}},L_{4})} &
			\mathbf{0}_{(I_{G_{2}},L_{5})} & \scriptstyle 3\\
			\mathbf{0}_{(I_{H_{2}},L_{1})} & \mathbf{0}_{(I_{H_{2}},L_{2})} &
			\mathcal{I}_{(I_{H_{2}},\cdot)} & \mathbf{0}_{(I_{H_{2}},L_{4})} &
			\mathbf{0}_{(I_{H_{2}},L_{5})} & \scriptstyle 4\\
			\mathbf{0}_{(I_{G_{3}},L_{1})} & \mathbf{0}_{(I_{G_{3}},L_{2})} &
			\mathbf{0}_{(I_{G_{3}},L_{3})} & (BB^{\top})_{(I_{G_{3}},\cdot)} &
			\mathcal{I}_{(I_{G_{3}},\cdot)} & \scriptstyle 5\\
			\mathbf{0}_{(I_{GH_{3}},L_{1})} & \mathbf{0}_{(I_{GH_{3}},L_{2})} &
			\mathbf{0}_{(I_{GH_{3}},L_{3})} & (BB^{\top})_{(I_{GH_{3}},\cdot)} &
			\mathcal{I}_{(I_{GH_{3}},\cdot)} & \scriptstyle 6\\
			\mathbf{0}_{(I_{GH_{3}},L_{1})} & \mathbf{0}_{(I_{GH_{3}},L_{2})} &
			\mathbf{0}_{(I_{GH_{3}},L_{3})} & \mathcal{I}_{(I_{GH_{3}},\cdot)} &
			\mathbf{0}_{(I_{GH_{3}},L_{5})} & \scriptstyle 7\\
			\mathbf{0}_{(I_{H_{3}},L_{1})} & \mathbf{0}_{(I_{H_{3}},L_{2})} &
			\mathbf{0}_{(I_{H_{3}},L_{3})} & \mathcal{I}_{(I_{H_{3}},\cdot)} &
			\mathbf{0}_{(I_{H_{3}},L_{5})} & \scriptstyle 8\\
			\mathbf{1}_{(I_{G_{4}},L_{1})} & \mathbf{0}_{(I_{G_{4}},L_{2})} &
			\mathbf{0}_{(I_{G_{4}},L_{3})} & \mathcal{I}_{(I_{G_{4}},\cdot)} &
			\mathbf{0}_{(I_{G_{4}},L_{5})} & \scriptstyle 9\\
			\mathbf{1}_{(I_{GH_{4}},L_{1})} & \mathbf{0}_{(I_{GH_{4}},L_{2})} &
			\mathbf{0}_{(I_{GH_{4}},L_{3})} & \mathcal{I}_{(I_{GH_{4}},\cdot)} &
			\mathbf{0}_{(I_{GH_{4}},L_{5})} & \scriptstyle 10\\
			\mathbf{0}_{(I_{GH_{4}},L_{1})} & \mathbf{0}_{(I_{GH_{4}},L_{2})} &
			\mathbf{0}_{(I_{GH_{4}},L_{3})} & \mathbf{0}_{(I_{GH_{4}},L_{4})} &
			\mathcal{I}_{(I_{GH_{4}},\cdot)} & \scriptstyle 11\\
			\mathbf{0}_{(I_{H_{4}},L_{1})} & \mathbf{0}_{(I_{H_{4}},L_{2})} &
			\mathbf{0}_{(I_{H_{4}},L_{3})} & \mathbf{0}_{(I_{H_{4}},L_{4})} &
			\mathcal{I}_{(I_{H_{4}},\cdot)}  & \scriptstyle 12\\
		\end{block}
	\end{blockarray}.
\end{equation}
		where $L_q$,  $q=1, ..., 5$ are the index sets of columns corresponding to the variables $C, \zeta,  z,  \alpha$ and $\xi$,  respectively,  and $\mathcal{I}_{\left(I_{G_{1}},  \cdot \right)}= \left[\mathbf{0}_{\left(I_{G_{1}},  \Psi_{2}\right)} \ \ I_{\left(I_{G_{1}},  \Psi_{3}\right)}\right] $. All the other selector blocks $\mathcal{I}_{\left(I_{\cdot},\cdot\right)}$ are defined analogously by placing identity on the relevant active coordinate set(s) and zeros elsewhere (with signs inherited from the corresponding gradient blocks).
		}

       \end{proposition}

        \noindent\textbf{Proof of Theorem \ref{aplmfcqr}}
        \begin{proof}
            Let $\rho^{\top} \Gamma=0$ where the nonzero column vector $\rho$ is defined by 
		$\rho=\left[
		\begin{array}{c}
			\left(\rho^{1}\right)^{\top},
			\left(\rho^{2}\right)^{\top},
			\left(\rho^{3}\right)^{\top},
            \left(\rho^{4}\right)^{\top},
			\left(\rho^{5}\right)^{\top},
			\left(\rho^{6}\right)^{\top},
            \left(\rho^{7}\right)^{\top},
            \left(\rho^{8}\right)^{\top},
            \left(\rho^{9}\right)^{\top},
			\left(\rho^{10}\right)^{\top},
			\left(\rho^{11}\right)^{\top},
			\left(\rho^{12}\right)^{\top}            
		\end{array}\right]^{\top}$, with $\rho^{6},\ \rho^{7},\ \rho^{10},\ \rho^{11} \ge 0$. It gives that 
		\begin{equation*}
			0=\rho^{\top} \Gamma:=\left[\begin{array}{lllll}
				S_{1} & S_{2} & S_{3} & S_{4} & S_{5}
			\end{array}\right],
		\end{equation*}
        where
		\begin{align}
    S_1=&\left(\rho^{9}\right)^{\top}\mathbf{1}_{(\Lambda_u,L_1)} + \left(\rho^{10}\right)^{\top}\mathbf{1}_{(\Lambda_3^c,L_1)}=0, \nonumber\\
    S_2=&\left(\rho^{2}\right)^{\top}\mathcal{I}_{\left(I_{H_{1}},  \cdot \right)}+\left(\rho^{3}\right)^{\top}\mathcal{I}_{\left(I_{G_{2}},  \cdot \right)}=0, \label{1103}\\
    S_3=&\left(\rho^{1}\right)^{\top}\mathcal{I}_{\left(I_{G_{1}},  \cdot \right)}+\left(\rho^{4}\right)^{\top}\mathcal{I}_{\left(I_{H_{2}},  \cdot \right)}, \label{0618}\\
    S_4=&\left(\rho^{1}\right)^{\top}(AB^{\top})_{(I_{G_1},\cdot)} + \left(\rho^{5}\right)^{\top}(BB^{\top})_{(I_{G_3},\cdot)}  
    +\left(\rho^{6}\right)^{\top}(BB^{\top})_{(I_{GH_3},\cdot)} \nonumber\\ 
    &+\left(\rho^{7}\right)^{\top}\mathcal{I}_{\left(I_{GH_{3}},  \cdot \right)}+\left(\rho^{8}\right)^{\top}\mathcal{I}_{\left(I_{H_{3}},  \cdot \right)}
    +\left(\rho^{9}\right)^{\top}\mathcal{I}_{\left(I_{G_4},  \cdot \right)}+\left(\rho^{10}\right)^{\top}\mathcal{I}_{\left(I_{GH_{4}},  \cdot \right)}=0, \nonumber\\
    S_5=&
    \left(\rho^{5}\right)^{\top}\mathcal{I}_{\left(I_{G_{3}},  \cdot \right)}+  \left(\rho^{6}\right)^{\top}\mathcal{I}_{\left(I_{GH_{3}},  \cdot \right)}
    + \left(\rho^{11}\right)^{\top}\mathcal{I}_{\left(I_{GH_{4}},  \cdot \right)} + \left(\rho^{12}\right)^{\top}\mathcal{I}_{\left(I_{H_{4}},  \cdot \right)} =0. \label{0115}
\end{align}

To manage multiple multipliers and their associated index sets in a unified way, we use a standardized notation. Each multiplier $\rho^{k}$ is decomposed based on designated index sets, defined as 
$\rho^{k} = \bigl(\rho_{A}^{k},\ \rho_{B}^{k}\bigr)$, where $A$ and $B$ correspond to specific components of these sets. For example, the multiplier $\rho^{5}$ is decomposed as
$\rho^{5}
=\bigl(\rho^{5}_{\Lambda_{3}^{+}},\  \rho^{5}_{\Lambda_{3}^{c}},\ \rho^{5}_{\Lambda_{u}} \bigr)$.
We use this partitioning convention for all multipliers to maintain notational clarity and consistency throughout the analysis.

Based on the above setup, in $\eqref{0618}$, it holds that
\begin{equation}
    \begin{aligned}
        S_3
        &= \left(\rho^{1}\right)^{\top}\left[\mathbf{0}_{\left(I_{G_{1}},  \Psi_{2}\right)} \ \ I_{\left(I_{G_{1}},  \Psi_{3}\right)}\right]
        +\left(\rho^{4}\right)^{\top}\left[I_{\left(I_{H_{2}},  \Psi_{2}\right)} \ \  \mathbf{0}_{\left(I_{H_{2}},  \Psi_{3}\right)}\right]=
        \left[\begin{array}{cccc}
            \left(\rho^{4}\right)^{\top}&
            \left(\rho^{1}\right)^{\top}
        \end{array}\right]=0.\nonumber
    \end{aligned}
\end{equation}
By $\eqref{0115}$, it holds that
\begin{equation}
    \begin{aligned}
S_5
&=\left(\rho^{5}\right)^{\top}\left[\mathbf{0}_{\left(I_{G_{3}},  \Lambda_{1} \cup \Lambda_{2}\right)} \  \ I_{\left(I_{G_{3}},  \Lambda_{3} \cup \Lambda_{u}\right)}\right]+  \left(\rho^{6}\right)^{\top}\left[I_{\left(I_{G H_{3}},  \Lambda_{1}\right)} \  \ \mathbf{0}_{\left(I_{G H_{3}},  \Lambda_{2} \cup \Lambda_{3} \cup \Lambda_{u}\right)}\right] \nonumber\\
&+ \left(\rho^{11}\right)^{\top}\left[\mathbf{0}_{\left(I_{G H_{4}},  \Lambda_{1} \cup \Lambda_{2} \cup \Lambda_{3}^{+} \cup \Lambda_{u}\right) }\  \ I_{\left(I_{G H_{4}},  \Lambda_{3}^{c}\right)}\right]+\left(\rho^{12}\right)^{\top}\left[I_{\left(I_{H_{4}},  \Lambda_{1} \cup \Lambda_{2} \cup \Lambda_{3}^{+}\right) }\  \ \mathbf{0}_{\left(I_{H_{4}},  \Lambda_{3}^{c} \cup \Lambda_{u}\right)}\right]\nonumber\\
&=\left[
\begin{array}{ccccc}
\left(\rho^{6}\right)^{\top}+\left(\rho^{12}_{\Lambda_1}\right)^{\top}&
\left(\rho^{12}_{\Lambda_2}\right)^{\top}&
\left(\rho^{5}_{\Lambda_3^+}\right)^{\top}+\left(\rho^{12}_{\Lambda_3^+}\right)^{\top}&
\left(\rho^{5}_{\Lambda_3^c}\right)^{\top}+\left(\rho^{11}\right)^{\top}&
\left(\rho^{5}_{\Lambda_u}\right)^{\top}
\end{array}\right]=0.
\end{aligned}
\end{equation}
        It implies that  $\rho^{1}=0$, $\rho^{4}=0$, $\rho^{12}_{\Lambda_2}=0$ and $\rho^{5}_{\Lambda_u}=0$. Moreover, by \cite[Lemma 1]{ref1} and $\eqref{1103}$,  $\rho^{2}=0$ and $\rho^{3}=0$.
        Therefore, proving that the matrix $\Gamma
        $ demonstrates positive linear independence is reduced to examining the positive linear independence of the following matrix
        \begin{equation}\label{tildegamma}
			\widetilde{\Gamma}
			=
			\left[
			\begin{array}{ccccc}
				
				\mathbf{0}_{\left(I_{G_{3}},  L_{1}\right)} & (BB^{\top})_{\left(I_{G_{3}}, \cdot \right)} & \mathcal{I}_{\left(\Lambda_{3},  \cdot \right)} \\
				
				\mathbf{0}_{\left(I_{GH_{3}},  L_{1}\right)} & (BB^{\top})_{\left(I_{GH_{3}}, \cdot \right)} & \mathcal{I}_{\left(I_{GH_{3}},  \cdot \right)} \\
				
				\mathbf{0}_{\left(I_{GH_{3}},  L_{1}\right)} &  \mathcal{I}_{\left(I_{GH_{3}},  \cdot \right)} & \mathbf{0}_{\left(I_{GH_{3}},  L_{5}\right)} \\
				
				\mathbf{0}_{\left(I_{H_{3}},  L_{1}\right)} &  \mathcal{I}_{\left(I_{H_{3}},  \cdot \right)} & \mathbf{0}_{\left(I_{H_{3}},  L_{5}\right)} \\
				
				\mathbf{1}_{\left(I_{G_{4}},  L_{1}\right)} & \mathcal{I}_{\left(I_{G_{4}},  \cdot \right)} & \mathbf{0}_{\left(I_{G_{4}},  L_{5}\right)} \\
				
				\mathbf{1}_{\left(I_{G H_{4}},  L_{1}\right)} & \mathcal{I}_{\left(I_{GH_{4}},  \cdot \right)} & \mathbf{0}_{\left(I_{G H_{4}},  L_{5}\right)}\\
				
				\mathbf{0}_{\left(I_{G H_{4}},  L_{1}\right)} & \mathbf{0}_{\left(I_{G H_{4}},  L_{4}\right)} & \mathcal{I}_{\left(I_{GH_{4}},  \cdot \right)}\\
				
				\mathbf{0}_{\left(I_{H_{4}},  L_{1}\right)} & \mathbf{0}_{\left(I_{H_{4}},  L_{4}\right)} & \mathcal{I}_{\left(\Lambda_{1} \cup \Lambda_{3}^{+},  \cdot \right)}\\
			\end{array}
			\right].
		\end{equation}
        
        Note that $\widetilde{\Gamma}$ is the submatrix of $\widetilde{\Gamma}$ in \cite[Theorem 1]{li2023} corresponding to the second row block to the last row block. Therefore, using the same technique of proof in \cite[Theorem 1]{li2023}, if $\left(B B^{\top}\right)_{\left(\Lambda_{1} \cup \Lambda_{3}, \Lambda_{1} \cup \Lambda_{3}\right)}$ is positive definite at $v^*$, $\rho=0$. Then  the MPEC-MFCQ-R holds at $ v^* $.
        \end{proof}

        \section{Proof of Theorem \ref{LICQ_new}}\label{appendixd}
        \begin{proof}

		 Taking into account of Proposition $\ref{pro1}$,  $\Gamma$ in $\eqref{gamma}$ takes the following form

        { 
{
\begin{equation}\label{gammaprop3}
\Gamma =
\begin{blockarray}{*{5}{c}c}
  \scriptstyle 1 & \scriptstyle 2 & \scriptstyle 3 & \scriptstyle 4 & \scriptstyle 5 & \\[-0.2ex]
\begin{block}{[ *{5}{c} ] c}
    \mathbf{0}_{(\Psi_3, L_1 )} & \mathbf{0}_{(\Psi_3, L_2 )} & \mathcal{I}_{\left(\Psi_3,\cdot\right)} & (AB^\top )_{(\Psi_3, \cdot)} & \mathbf{0}_{(\Psi_3, L_5 )} & \scriptstyle 1\\
  \mathbf{0}_{(\Psi_2, L_1 )} & \mathcal{I}_{\left(\Psi_2,\cdot\right)} & \mathbf{0}_{(\Psi_2,  L_{3})}  & \mathbf{0}_{(\Psi_{2},  L_{4}) } & \mathbf{0}_{(\Psi_{2},  L_{5})} & \scriptstyle 2\\
  \mathbf{0}_{(\Psi_3, L_1 )} &  \mathcal{I}_{\left(\Psi_3,\cdot\right)} & \mathbf{0}_{(\Psi_{3},  L_{3})  }&   \mathbf{0}_{(\Psi_{3},  L_{4}) }& \mathbf{0}_{(\Psi_{3},  L_{5})} & \scriptstyle 3\\
  \mathbf{0}_{(\Psi_2, L_1 )}& \mathbf{0}_{(\Psi_{2},  L_{2})}  & \mathcal{I}_{\left(\Psi_2,\cdot\right)}  &  \mathbf{0}_{(\Psi_{2},  L_{4}) } & \mathbf{0}_{(\Psi_{2},  L_{5}) }& \scriptstyle 4\\
  \mathbf{0}_{(\Lambda_{3}^+,  L_{1}) }& \mathbf{0}_{(\Lambda_{3}^+,  L_{2}) } &  \mathbf{0}_{(\Lambda_{3}^+,  L_3) } &  (B B^{\top})_{(\Lambda_{3}^+,  \cdot) } & \mathcal{I}_{\left(\Lambda_{3}^+,\cdot\right)} & \scriptstyle 5\\
  \mathbf{0}_{(\Lambda_{3}^c,  L_{1}) }& \mathbf{0}_{(\Lambda_{3}^c,  L_{2}) } &  \mathbf{0}_{(\Lambda_{3}^c,  L_3) } &  (B B^{\top})_{(\Lambda_{3}^c,  \cdot) } & \mathcal{I}_{\left(\Lambda_{3}^c,\cdot\right)} & \scriptstyle 6\\
  \mathbf{0}_{(\Lambda_{u},  L_{1}) } & \mathbf{0}_{(\Lambda_{u},  L_{2}) } &  \mathbf{0}_{(\Lambda_{u},  L_{3}) } &  (B B^{\top})_{(\Lambda_{u},  \cdot)} & \mathcal{I}_{\left(\Lambda_u,\cdot\right)} & \scriptstyle 7\\
  \mathbf{0}_{(\Lambda_{1},  L_{1}) } & \mathbf{0}_{(\Lambda_{1},  L_{2}) } &  \mathbf{0}_{(\Lambda_{1},  L_{3}) } & (B B^{\top})_{(\Lambda_{1},  \cdot) } & \mathcal{I}_{\left(\Lambda_1,\cdot\right)} & \scriptstyle 8\\
  \mathbf{0}_{(\Lambda_{1},  L_{1}) }&  \mathbf{0}_{(\Lambda_{1},  L_{2})}  & \mathbf{0}_{(\Lambda_{1},  L_{3}) } & \mathcal{I}_{\left(\Lambda_1,\cdot\right)}  & \mathbf{0}_{(\Lambda _{1},  L_{5})} & \scriptstyle 9\\
  \mathbf{0}_{(\Lambda_{2},  L_{1}) } & \mathbf{0}_{(\Lambda_{2},  L_{2}) } &  \mathbf{0}_{(\Lambda_{2},  L_{3}) } &  \mathcal{I}_{\left(\Lambda_2,\cdot\right)}  & \mathbf{0}_{(\Lambda_{2},  L_{5})} & \scriptstyle 10\\
  \mathbf{1}_{(\Lambda_{u},  L_{1}) } & \mathbf{0}_{(\Lambda_{u},  L_{2}) } &  \mathbf{0}_{(\Lambda_{u},  L_{3}) } &  \mathcal{I}_{\left(\Lambda_u,\cdot\right)}  & \mathbf{0}_{(\Lambda_{u},  L_{5})} & \scriptstyle 11\\
  \mathbf{1}_{(\Lambda_{3}^c,  L_1) } & \mathbf{0}_{(\Lambda_{3}^{c},  L_{2}) } & \mathbf{0}_{(\Lambda_{3}^{c},  L_{3}) } &  \mathcal{I}_{\left(\Lambda_{3}^c,\cdot\right)} & \mathbf{0}_{(\Lambda_{3}^{c},  L_{5})} & \scriptstyle 12\\
  \mathbf{0}_{(\Lambda_{3}^c,  L_{1}) }  & \mathbf{0}_{(\Lambda_{3}^{c},  L_{2}) } & \mathbf{0}_{(\Lambda_{3}^{c},  L_{3}) } & \mathbf{0}_{(\Lambda_{3}^{c},  L_{4}) } & \mathcal{I}_{\left(\Lambda_{3}^c,\cdot\right)} & \scriptstyle 13\\
  \mathbf{0}_{(\Lambda_{1},  L_{1}) } & \mathbf{0}_{(\Lambda_{1},  L_{2})} &  \mathbf{0}_{(\Lambda_{1},  L_{3}) } & \mathbf{0}_{(\Lambda_{1},  L_{4}) } & \mathcal{I}_{\left(\Lambda_1,\cdot\right)} & \scriptstyle 14\\
  \mathbf{0}_{(\Lambda_{2},  L_{1}) } &  \mathbf{0}_{(\Lambda_{2},  L_{2}) } & \mathbf{0}_{(\Lambda_{2},  L_{3})  }&  \mathbf{0}_{(\Lambda_{2},  L_{4}) }& \mathcal{I}_{\left(\Lambda_2,\cdot\right)} & \scriptstyle 15\\
  \mathbf{0}_{(\Lambda_{3}^+,  L_{1}) } & \mathbf{0}_{(\Lambda_{3}^{+},  L_{2}) } & \mathbf{0}_{(\Lambda_{3}^{+},  L_{3}) }  &  \mathbf{0}_{(\Lambda_{3}^{+},  L_{4}) } & \mathcal{I}_{\left(\Lambda_{3}^+,\cdot\right)}    & \scriptstyle 16\\
\end{block}
\end{blockarray}.
\end{equation}
}

		$\left(i\right)$ By the definition of $L_q$,  $q=1, ..., 5$,  there are $d$ columns in $\Gamma$. The number of rows is $d-1+\left | \Lambda_{1} \right | + \left | \Lambda_{3}^c \right |$ by the $\Gamma$ in $\eqref{gammaprop3}$. Thus, if $|\Lambda_{1}|+|\Lambda_{3}^c|>1$, then $\mathrm{rows}(\Gamma)>\mathrm{cols}(\Gamma)$. Hence the gradients of \eqref{mpec} are linearly dependent, and  the MPEC-LICQ fails at $v^*$.

    To show $\left(ii\right)$ and $\left(iii\right)$, 
    subtracting the thirteenth block from the sixth row block, subtracting the fourteenth block from the eighth row block, subtracting the sixteenth block from the fifth row block, we get the following matrix 

{
    \begin{equation}\label{gamma0new}
\Gamma^0 =
\begin{blockarray}{*{5}{c}c}
  \scriptstyle 1 & \scriptstyle 2 & \scriptstyle 3 & \scriptstyle 4 & \scriptstyle 5 & \\[-0.2ex]
\begin{block}{[ *{5}{c} ] c}
    \mathbf{0}_{(\Psi_3, L_1 )} & \mathbf{0}_{(\Psi_3, L_2 )} & \mathcal{I}_{\left(\Psi_3,\cdot\right)} & (AB^\top )_{(\Psi_3, \cdot)} & \mathbf{0}_{(\Psi_3, L_5 )} & \scriptstyle 1\\
  \mathbf{0}_{(\Psi_2, L_1 )} & \mathcal{I}_{\left(\Psi_2,\cdot\right)} & \mathbf{0}_{(\Psi_2,  L_{3})}  & \mathbf{0}_{(\Psi_{2},  L_{4}) } & \mathbf{0}_{(\Psi_{2},  L_{5})} & \scriptstyle 2\\
  \mathbf{0}_{(\Psi_3, L_1 )} &  \mathcal{I}_{\left(\Psi_3,\cdot\right)} & \mathbf{0}_{(\Psi_{3},  L_{3})  }&   \mathbf{0}_{(\Psi_{3},  L_{4}) }& \mathbf{0}_{(\Psi_{3},  L_{5})} & \scriptstyle 3\\
  \mathbf{0}_{(\Psi_2, L_1 )}& \mathbf{0}_{(\Psi_{2},  L_{2})}  &  \mathcal{I}_{\left(\Psi_2,\cdot\right)}  &  \mathbf{0}_{(\Psi_{2},  L_{4}) } & \mathbf{0}_{(\Psi_{2},  L_{5}) }& \scriptstyle 4\\
  \mathbf{0}_{(\Lambda_{3}^+,  L_{1}) }& \mathbf{0}_{(\Lambda_{3}^+,  L_{2}) } &  \mathbf{0}_{(\Lambda_{3}^+,  L_3) } &  (B B^{\top})_{(\Lambda_{3}^+,  \cdot) } & \mathbf{0}_{(\Lambda_{3}^+,  L_{5}) } & \scriptstyle 5\\
  \mathbf{0}_{(\Lambda_{3}^c,  L_{1}) }& \mathbf{0}_{(\Lambda_{3}^c,  L_{2}) } &  \mathbf{0}_{(\Lambda_{3}^c,  L_3) } &  (B B^{\top})_{(\Lambda_{3}^c,  \cdot) } & \mathbf{0}_{(\Lambda_{3}^c,  L_5) } & \scriptstyle 6\\
  \mathbf{0}_{(\Lambda_{u},  L_{1}) } & \mathbf{0}_{(\Lambda_{u},  L_{2}) } &  \mathbf{0}_{(\Lambda_{u},  L_{3}) } &  (B B^{\top})_{(\Lambda_{u},  \cdot)} & \mathcal{I}_{\left(\Lambda_u,\cdot\right)} & \scriptstyle 7\\
  \mathbf{0}_{(\Lambda_{1},  L_{1}) } & \mathbf{0}_{(\Lambda_{1},  L_{2}) } &  \mathbf{0}_{(\Lambda_{1},  L_{3}) } & (B B^{\top})_{(\Lambda_{1},  \cdot) } & \mathbf{0}_{(\Lambda_{1},  L_{5}) } & \scriptstyle 8\\
  \mathbf{0}_{(\Lambda_{1},  L_{1}) }&  \mathbf{0}_{(\Lambda_{1},  L_{2})}  & \mathbf{0}_{(\Lambda_{1},  L_{3}) } &  \mathcal{I}_{\left(\Lambda_1,\cdot\right)}  & \mathbf{0}_{(\Lambda _{1},  L_{5})} & \scriptstyle 9\\
  \mathbf{0}_{(\Lambda_{2},  L_{1}) } & \mathbf{0}_{(\Lambda_{2},  L_{2}) } &  \mathbf{0}_{(\Lambda_{2},  L_{3}) } &  \mathcal{I}_{\left(\Lambda_2,\cdot\right)} & \mathbf{0}_{(\Lambda_{2},  L_{5})} & \scriptstyle 10\\
  \mathbf{1}_{(\Lambda_{u},  L_{1}) } & \mathbf{0}_{(\Lambda_{u},  L_{2}) } &  \mathbf{0}_{(\Lambda_{u},  L_{3}) } &  \mathcal{I}_{\left(\Lambda_u,\cdot\right)} & \mathbf{0}_{(\Lambda_{u},  L_{5})} & \scriptstyle 11\\
  \mathbf{1}_{(\Lambda_{3}^c,  L_1) } & \mathbf{0}_{(\Lambda_{3}^{c},  L_{2}) } & \mathbf{0}_{(\Lambda_{3}^{c},  L_{3}) } &  \mathcal{I}_{\left(\Lambda_{3}^c,\cdot\right)}  & \mathbf{0}_{(\Lambda_{3}^{c},  L_{5})} & \scriptstyle 12\\
  \mathbf{0}_{(\Lambda_{3}^c,  L_{1}) }  & \mathbf{0}_{(\Lambda_{3}^{c},  L_{2}) } & \mathbf{0}_{(\Lambda_{3}^{c},  L_{3}) } & \mathbf{0}_{(\Lambda_{3}^{c},  L_{4}) } & \mathcal{I}_{\left(\Lambda_{3}^c,\cdot\right)} & \scriptstyle 13\\
  \mathbf{0}_{(\Lambda_{1},  L_{1}) } & \mathbf{0}_{(\Lambda_{1},  L_{2})} &  \mathbf{0}_{(\Lambda_{1},  L_{3}) } & \mathbf{0}_{(\Lambda_{1},  L_{4}) } & \mathcal{I}_{\left(\Lambda_1,\cdot\right)} & \scriptstyle 14\\
  \mathbf{0}_{(\Lambda_{2},  L_{1}) } &  \mathbf{0}_{(\Lambda_{2},  L_{2}) } & \mathbf{0}_{(\Lambda_{2},  L_{3})  }&  \mathbf{0}_{(\Lambda_{2},  L_{4}) }& \mathcal{I}_{\left(\Lambda_2,\cdot\right)} & \scriptstyle 15\\
  \mathbf{0}_{(\Lambda_{3}^+,  L_{1}) } & \mathbf{0}_{(\Lambda_{3}^{+},  L_{2}) } & \mathbf{0}_{(\Lambda_{3}^{+},  L_{3}) }  &  \mathbf{0}_{(\Lambda_{3}^{+},  L_{4}) } & \mathcal{I}_{\left(\Lambda_{3}^+,\cdot\right)}   & \scriptstyle 16\\
\end{block}
\end{blockarray}.
\end{equation}}

Starting from the block matrix $\Gamma^0$ in \eqref{gamma0new}, we apply a finite sequence of block row operations that combine the first, fifth, sixth, seventh, and eighth row blocks with the auxiliary row blocks indexed by $\Lambda_1$, $\Lambda_2$, $\Lambda_u$, and $\Lambda_3^c$. 
In the first row block, these operations eliminate the subblocks $(AB^\top)_{(\Psi_3,\Lambda_1)}$, $(AB^\top)_{(\Psi_3,\Lambda_2)}$, $(AB^\top)_{(\Psi_3,\Lambda_u)}$, and $(AB^\top)_{(\Psi_3,\Lambda_3^c)}$, while in the row blocks indexed by $\Lambda_3^+$, $\Lambda_3^c$, $\Lambda_u$, and $\Lambda_1$ (rows $5$–$8$) they eliminate the corresponding subblocks $(BB^\top)_{(\cdot,\Lambda_1)}$, $(BB^\top)_{(\cdot,\Lambda_2)}$, $(BB^\top)_{(\cdot,\Lambda_u)}$, and $(BB^\top)_{(\cdot,\Lambda_3^c)}$. 
These operations only modify the first column blocks in the affected rows, and the resulting entries are collected into the vectors $a_1,\dots,a_5$, while the overall rank of the matrix is preserved. 
After performing all these row operations, we reach the following matrix

{
\begin{equation}\label{gammahatnew}
\widehat{\Gamma} =
\begin{blockarray}{*{5}{c}c}
  \scriptstyle 1 & \scriptstyle 2 & \scriptstyle 3 & \scriptstyle 4 & \scriptstyle 5 & \\[-0.2ex]
\begin{block}{[ *{5}{c} ] c}
    a_1 & \mathbf{0}_{(\Psi_3, L_2 )} & \mathcal{I}_{\left(\Psi_3,\cdot\right)}  & Q_1 & \mathbf{0}_{(\Psi_3, L_5 )} & \scriptstyle 1\\
  \mathbf{0}_{(\Psi_2, L_1 )} & \mathcal{I}_{\left(\Psi_2,\cdot\right)}  & \mathbf{0}_{(\Psi_2,  L_{3})}  & \mathbf{0}_{(\Psi_{2},  L_{4}) } & \mathbf{0}_{(\Psi_{2},  L_{5})} & \scriptstyle 2\\
  \mathbf{0}_{(\Psi_3, L_1 )} &  \mathcal{I}_{\left(\Psi_3,\cdot\right)}  & \mathbf{0}_{(\Psi_{3},  L_{3})  }&   \mathbf{0}_{(\Psi_{3},  L_{4}) }& \mathbf{0}_{(\Psi_{3},  L_{5})} & \scriptstyle 3\\
  \mathbf{0}_{(\Psi_2, L_1 )}& \mathbf{0}_{(\Psi_{2},  L_{2})}  &  \mathcal{I}_{\left(\Psi_2,\cdot\right)}  &  \mathbf{0}_{(\Psi_{2},  L_{4}) } & \mathbf{0}_{(\Psi_{2},  L_{5}) }& \scriptstyle 4\\
  a_2& \mathbf{0}_{(\Lambda_{3}^+,  L_{2}) } &  \mathbf{0}_{(\Lambda_{3}^+,  L_3) } &  Q_2 & \mathbf{0}_{(\Lambda_{3}^+,  L_{5}) } & \scriptstyle 5\\
  a_3& \mathbf{0}_{(\Lambda_{3}^c,  L_{2}) } &  \mathbf{0}_{(\Lambda_{3}^c,  L_3) } &  Q_3 & \mathbf{0}_{(\Lambda_{3}^c,  L_5) } & \scriptstyle 6\\
  a_4 & \mathbf{0}_{(\Lambda_{u},  L_{2}) } &  \mathbf{0}_{(\Lambda_{u},  L_{3}) } &  Q_4 & \mathcal{I}_{\left(\Lambda_u,\cdot\right)} & \scriptstyle 7\\
  a_5 & \mathbf{0}_{(\Lambda_{1},  L_{2}) } &  \mathbf{0}_{(\Lambda_{1},  L_{3}) } & Q_5 & \mathbf{0}_{(\Lambda_{1},  L_{5}) } & \scriptstyle 8\\
  \mathbf{0}_{(\Lambda_{1},  L_{1}) }&  \mathbf{0}_{(\Lambda_{1},  L_{2})}  & \mathbf{0}_{(\Lambda_{1},  L_{3}) } &  \mathcal{I}_{\left(\Lambda_1,\cdot\right)}  & \mathbf{0}_{(\Lambda _{1},  L_{5})} & \scriptstyle 9\\
  \mathbf{0}_{(\Lambda_{2},  L_{1}) } & \mathbf{0}_{(\Lambda_{2},  L_{2}) } &  \mathbf{0}_{(\Lambda_{2},  L_{3}) } &  \mathcal{I}_{\left(\Lambda_2,\cdot\right)} & \mathbf{0}_{(\Lambda_{2},  L_{5})} & \scriptstyle 10\\
  \mathbf{1}_{(\Lambda_{u},  L_{1}) } & \mathbf{0}_{(\Lambda_{u},  L_{2}) } &  \mathbf{0}_{(\Lambda_{u},  L_{3}) } &  \mathcal{I}_{\left(\Lambda_u,\cdot\right)}  & \mathbf{0}_{(\Lambda_{u},  L_{5})} & \scriptstyle 11\\
  \mathbf{1}_{(\Lambda_{3}^c,  L_1) } & \mathbf{0}_{(\Lambda_{3}^{c},  L_{2}) } & \mathbf{0}_{(\Lambda_{3}^{c},  L_{3}) } &  \mathcal{I}_{\left(\Lambda_{3}^{c},\cdot\right)}  & \mathbf{0}_{(\Lambda_{3}^{c},  L_{5})} & \scriptstyle 12\\
  \mathbf{0}_{(\Lambda_{3}^c,  L_{1}) }  & \mathbf{0}_{(\Lambda_{3}^{c},  L_{2}) } & \mathbf{0}_{(\Lambda_{3}^{c},  L_{3}) } & \mathbf{0}_{(\Lambda_{3}^{c},  L_{4}) } & \mathcal{I}_{\left(\Lambda_{3}^{c},\cdot\right)} & \scriptstyle 13\\
  \mathbf{0}_{(\Lambda_{1},  L_{1}) } & \mathbf{0}_{(\Lambda_{1},  L_{2})} &  \mathbf{0}_{(\Lambda_{1},  L_{3}) } & \mathbf{0}_{(\Lambda_{1},  L_{4}) } & \mathcal{I}_{\left(\Lambda_1,\cdot\right)} & \scriptstyle 14\\
  \mathbf{0}_{(\Lambda_{2},  L_{1}) } &  \mathbf{0}_{(\Lambda_{2},  L_{2}) } & \mathbf{0}_{(\Lambda_{2},  L_{3})  }&  \mathbf{0}_{(\Lambda_{2},  L_{4}) }& \mathcal{I}_{\left(\Lambda_2,\cdot\right)} & \scriptstyle 15\\
  \mathbf{0}_{(\Lambda_{3}^+,  L_{1}) } & \mathbf{0}_{(\Lambda_{3}^{+},  L_{2}) } & \mathbf{0}_{(\Lambda_{3}^{+},  L_{3}) }  &  \mathbf{0}_{(\Lambda_{3}^{+},  L_{4}) } & \mathcal{I}_{\left(\Lambda_{3}^{+},\cdot\right)}   & \scriptstyle 16\\
\end{block}
\end{blockarray},
\end{equation}
}
where 
{\small
\begin{align*}
            a_1&=(AB^{\top})_{(\Psi_3, \Lambda_{3}^c\cup\Lambda_{u})}\mathbf{1}_{|\Lambda_{3}^c\cup\Lambda_{u}|},\ 
			a_2=(BB^{\top})_{(\Lambda_{3}^+, \Lambda_{3}^c\cup\Lambda_{u})}\mathbf{1}_{|\Lambda_{3}^c\cup\Lambda_{u}|},\\   
			a_3&=(BB^{\top})_{(\Lambda_{3}^c, \Lambda_{3}^c\cup\Lambda_{u})}\mathbf{1}_{|\Lambda_{3}^c\cup\Lambda_{u}|} ,\
            a_4=(BB^{\top})_{(\Lambda_u, \Lambda_{3}^c\cup\Lambda_{u})}\mathbf{1}_{|\Lambda_{3}^c\cup\Lambda_{u}|},\\   
			a_5&=(BB^{\top})_{(\Lambda_1, \Lambda_{3}^c\cup\Lambda_{u})}\mathbf{1}_{|\Lambda_{3}^c\cup\Lambda_{u}|} ,\
            Q_1=\left[\begin{array}{ll}
            (AB^\top )_{\left(\Psi_3, \Lambda_{3}^{+}\right)} & 0_{\left(\Psi_3,\Lambda_{1} \cup \Lambda_{2} \cup \Lambda_{3}^{c} \cup \Lambda_{u}\right)}
            \end{array}\right],\\ 
            Q_2 &=\left[\begin{array}{ll}
            (BB^\top )_{\left(\Lambda_{3}^{+}, \Lambda_{3}^{+}\right)} & \mathbf{0}_{\left(\Lambda_{3}^{+},\Lambda_{1} \cup \Lambda_{2} \cup \Lambda_{3}^{c} \cup \Lambda_{u}\right)}
            \end{array}\right],\
            Q_3=\left[\begin{array}{ll}
            (BB^\top )_{\left(\Lambda_{3}^{c}, \Lambda_{3}^{+}\right)} & \mathbf{0}_{\left(\Lambda_{3}^{c},\Lambda_{1} \cup \Lambda_{2} \cup \Lambda_{3}^{c} \cup \Lambda_{u}\right)}
            \end{array}\right],\\ 
            Q_4&=\left[\begin{array}{ll}
            (BB^\top )_{\left(\Lambda_{u}, \Lambda_{3}^{+}\right)} & \mathbf{0}_{\left(\Lambda_{u},\Lambda_{1} \cup \Lambda_{2} \cup \Lambda_{3}^{c} \cup \Lambda_{u}\right)}
            \end{array}\right],\
            Q_5=\left[\begin{array}{ll}
            (BB^\top )_{\left(\Lambda_{1}, \Lambda_{3}^{+}\right)} & \mathbf{0}_{\left(\Lambda_{1},\Lambda_{1} \cup \Lambda_{2} \cup \Lambda_{3}^{c} \cup \Lambda_{u}\right)}
            \end{array}\right].
		\end{align*}
        }
        After suitable permutations of rows and columns, the resulting matrix $\widetilde{\Gamma}$ has the same row rank as $\widehat{\Gamma}$.

{
\begin{equation}\label{Gammatilde}
\widetilde{\Gamma} =
\begin{blockarray}{*{5}{c}c}
  \scriptstyle 4 & \scriptstyle 3 & \scriptstyle 1 & \scriptstyle 2 & \scriptstyle 5 & \\[-0.2ex]
\begin{block}{[ *{5}{c} ] c}
    \mathcal{I}_{\left(\Lambda_1,\cdot\right)} & \mathbf{0}_{(\Lambda_{1}, L_{3})} & \mathbf{0}_{(\Lambda_{1}, L_{1})} & \mathbf{0}_{(\Lambda_{1}, L_{2})} & \mathbf{0}_{(\Lambda_{1}, L_{5})} & \scriptstyle 9\\
  \mathcal{I}_{\left(\Lambda_2,\cdot\right)} & \mathbf{0}_{(\Lambda_{2}, L_{3})} & \mathbf{0}_{(\Lambda_{2}, L_{1})} & \mathbf{0}_{(\Lambda_{2}, L_{2})} & \mathbf{0}_{(\Lambda_{2}, L_{5})} & \scriptstyle 10\\
  \mathcal{I}_{\left(\Lambda_u,\cdot\right)} & \mathbf{0}_{(\Lambda_{u}, L_{3})} & \mathbf{1}_{(\Lambda_{u}, L_{1})} & \mathbf{0}_{(\Lambda_{u}, L_{2})} & \mathbf{0}_{(\Lambda_{u}, L_{5})} & \scriptstyle 11\\
  \mathcal{I}_{\left(\Lambda_3^c,\cdot\right)} & \mathbf{0}_{(\Lambda_{3}^{c}, L_{3})} & \mathbf{1}_{(\Lambda_{3}^{c}, L_{1})} & \mathbf{0}_{(\Lambda_{3}^{c}, L_{2})} & \mathbf{0}_{(\Lambda_{3}^{c}, L_{5})} & \scriptstyle 12\\
  {\color{green}Q_2}
  & \mathbf{0}_{(\Lambda_{3}^{+}, L_{3})} & {\color{green}a_{2}} & \mathbf{0}_{(\Lambda_{3}^{+}, L_{2})} & \mathbf{0}_{(\Lambda_{3}^{+}, L_{5})} & \scriptstyle 5\\
  {\color{green}Q_3}
  & \mathbf{0}_{(\Lambda_{3}^{c}, L_{3})} & {\color{green}a_{3}} & \mathbf{0}_{(\Lambda_{3}^{c}, L_{2})} & \mathbf{0}_{(\Lambda_{3}^{c}, L_{5})} & \scriptstyle 6\\
  Q_4
  & \mathbf{0}_{(\Lambda_{u}, L_{3})} & a_{4} & \mathbf{0}_{(\Lambda_{u}, L_{2})} & \mathcal{I}_{\left(\Lambda_u,\cdot\right)} & \scriptstyle 7\\
  {\color{green}Q_5}
  & \mathbf{0}_{(\Lambda_{1}, L_{3})} & {\color{green}a_{5}} & \mathbf{0}_{(\Lambda_{1}, L_{2})} & \mathbf{0}_{(\Lambda_{1}, L_{5})} & \scriptstyle 8\\
  Q_1
  &\mathcal{I}_{\left(\Psi_{2},\cdot\right)} & a_{1} & \mathbf{0}_{(\Psi_{3}, L_{2})} & \mathbf{0}_{(\Psi_{3}, L_{5})} & \scriptstyle 1\\
  \mathbf{0}_{(\Psi_{2}, L_{4})} & \mathbf{0}_{(\Psi_{2}, L_{3})} & \mathbf{0}_{(\Psi_{2}, L_{1})} & \mathcal{I}_{\left(\Psi_{3},\cdot\right)} & \mathbf{0}_{(\Psi_{2}, L_{5})} & \scriptstyle 2\\
  \mathbf{0}_{(\Psi_{3}, L_{4})} & \mathbf{0}_{(\Psi_{3}, L_{3})} & \mathbf{0}_{(\Psi_{3}, L_{1})} & \mathcal{I}_{\left(\Psi_{3},\cdot\right)} & \mathbf{0}_{(\Psi_{3}, L_{5})} & \scriptstyle 3\\
  \mathbf{0}_{(\Psi_{2}, L_{4})} & \mathcal{I}_{\left(\Psi_{2},\cdot\right)} & \mathbf{0}_{(\Psi_{2}, L_{1})} & \mathbf{0}_{(\Psi_{2}, L_{2})} & \mathbf{0}_{(\Psi_{2}, L_{5})} & \scriptstyle 4\\
  \mathbf{0}_{(\Lambda_{3}^{c}, L_{4})} & \mathbf{0}_{(\Lambda_{3}^{c}, L_{3})} & \mathbf{0}_{(\Lambda_{3}^{c}, L_{1})} & \mathbf{0}_{(\Lambda_{3}^{c}, L_{2})} & \mathcal{I}_{\left(\Lambda_3^c,\cdot\right)} & \scriptstyle 13\\
  \mathbf{0}_{(\Lambda_{1}, L_{4})} & \mathbf{0}_{(\Lambda_{1}, L_{3})} & \mathbf{0}_{(\Lambda_{1}, L_{1})} & \mathbf{0}_{(\Lambda_{1}, L_{2})} & \mathcal{I}_{\left(\Lambda_1,\cdot\right)} & \scriptstyle 14\\
  \mathbf{0}_{(\Lambda_{2}, L_{4})} & \mathbf{0}_{(\Lambda_{2}, L_{3})} & \mathbf{0}_{(\Lambda_{2}, L_{1})} & \mathbf{0}_{(\Lambda_{2}, L_{2})} & \mathcal{I}_{\left(\Lambda_2,\cdot\right)} & \scriptstyle 15\\
  \mathbf{0}_{(\Lambda_{3}^{+}, L_{4})} & \mathbf{0}_{(\Lambda_{3}^{+}, L_{3})} & \mathbf{0}_{(\Lambda_{3}^{+}, L_{1})} & \mathbf{0}_{(\Lambda_{3}^{+}, L_{2})} & \mathcal{I}_{\left(\Lambda_3^+,\cdot\right)}   & \scriptstyle 16\\
\end{block}
\end{blockarray}.
\end{equation}
}

Reorder the $L_4$ subcolumns as $(\Lambda_{1},\Lambda_{2},\Lambda_{3}^{c},\Lambda_u,\Lambda_{3}^{+})$.
Use rows $9$–$12$ as pivots and perform block Gaussian elimination.
These rank–preserving steps zero all nonpivot entries in the first four $L_4$ subcolumns.
Hence every $Q$–row has support in $L_4$ only on $\Lambda_{3}^{+}$.
Hence $\widetilde{\Gamma}$ has full row rank if and only if $\Gamma_{\mathrm{sub}}$ has full row rank, where
\[
\Gamma_{\mathrm{sub}}^{Q}=
\begin{bmatrix}
(BB^\top)_{(\Lambda_{3}^{+},\Lambda_{3}^{+})} & a_{2}  \\
(BB^\top)_{(\Lambda_{3}^{c},\Lambda_{3}^{+})} & a_{3}  \\
(BB^\top)_{(\Lambda_{1},\Lambda_{3}^{+})}     & a_{5} 

\end{bmatrix}.
\]

Since $(BB^\top)_{(\Lambda_{3}^{+},\Lambda_{3}^{+})}\succ0$, pivot on this block and perform block Gaussian elimination to clear the first block column.
Elementary row operations then yield the rank–equivalent form
\[
\begin{bmatrix}
(BB^\top)_{(\Lambda_{3}^{+},\Lambda_{3}^{+})} & a_{2}  \\
0 & \tilde a_{3}  \\
0 & \tilde a_{5} 
\end{bmatrix},
\qquad
\begin{cases}
\tilde a_{3}=a_3-(BB^\top)_{(\Lambda_{3}^{c},\Lambda_{3}^{+})}(BB^\top)_{(\Lambda_{3}^{+},\Lambda_{3}^{+})}^{-1}a_2,\\
\tilde a_{5}=a_5-(BB^\top)_{(\Lambda_{1},\Lambda_{3}^{+})}(BB^\top)_{(\Lambda_{3}^{+},\Lambda_{3}^{+})}^{-1}a_2.
\end{cases}
\]

Thus,
\[
\operatorname{rank}\big(\Gamma_{\mathrm{sub}}^{Q}\big)
=|\Lambda_{3}^{+}|
+\operatorname{rank}\!\begin{bmatrix}\tilde a_{3}\\ \tilde a_{5}\end{bmatrix}.
\]
In particular, $\Gamma_{\mathrm{sub}}^{Q}$ has full row rank if and only if
$\begin{bmatrix}\tilde a_{3}\\ \tilde a_{5}\end{bmatrix}$ has full row rank. Note that $\begin{bmatrix}\tilde a_{3}\\ \tilde a_{5}\end{bmatrix}$ is a single–column matrix.
Hence full row rank is possible only when the total number of its rows is at most one; equivalently,
$|\Lambda_{3}^{c}|+|\Lambda_{1}|\in\{0,1\}.$
That is, either $|\Lambda_{3}^{c}|=1,\,|\Lambda_{1}|=0$, or $|\Lambda_{3}^{c}|=0,\,|\Lambda_{1}|=1$, or both sets are empty.
In the one–row cases, full row rank further reduces to
$\begin{bmatrix}\tilde a_{3}\\ \tilde a_{5}\end{bmatrix}\neq 0$. Recall the definition of $\tilde a_{3}$, $\tilde a_{5}$ and $\Gamma_{\mathrm{sub}}$, we get the result.
}
\end{proof}

\end{document}